\newaliascnt{proposition}{lemma}
\newtheorem{proposition}[proposition]{Proposition}
\newaliascnt{theorem}{lemma}
\newtheorem{theorem}[theorem]{Theorem}
\newaliascnt{assumption}{lemma}
\newaliascnt{remark}{lemma}
\newaliascnt{definition}{lemma}
\newtheorem{definition}[definition]{Definition}
\theoremstyle{nonumberplain}
\newtheorem{proof}{Proof}
\let\RE\Re
\let\Re=\undefined
\DeclareMathOperator{\Re}{\RE e}
\let\IM\Im
\let\Im=\undefined
\DeclareMathOperator{\Im}{\IM m}
\newcommand{\R}{\mathbbm R}
\renewcommand{\C}{\mathbbm C}
\newcommand{\e}{\mathrm e}
\DeclareMathOperator{\curl}{\nabla \times}
\newcommand{\Hone}{\b H^1}
\newcommand{\Eone}{\b E^1}
\newcommand{\Htwo}{\b H^2}
\newcommand{\Etwo}{\b E^2}
\newcommand{\He}{\b H^{ext}}
\newcommand{\Ee}{\b E^{ext}}
\newcommand{\he}{h^{ext}}
\newcommand{\ee}{e^{ext}}
\newcommand{\n}{\b n}
\renewcommand{\b}[1]{\ensuremath{\mathbf{#1}}} % bold type for vectors
\renewcommand{\arraystretch}{1.5}
\begin{document}

\title{ The direct electromagnetic scattering problem by a piecewise constant inhomogeneous cylinder at oblique incidence }

\author{ D. Gintides$^1$\thanks{dgindi@math.ntua.gr}, \,  S. Giogiakas$^1$\thanks{giogiakas@mail.ntua.gr} \, and L. Mindrinos$^2$\thanks{leonidas.mindrinos@univie.ac.at}
}

\affil{$^1$Department of Mathematics, National Technical University of Athens, Greece \\
$^2$Faculty of Mathematics, University of Vienna, Austria}

% \date{}

\maketitle

 \begin{abstract}
 We consider the solvability of the direct scattering problem of an obliquely incident time-harmonic electromagnetic wave by a piecewise constant inhomogeneous, penetrable and infinitely long cylinder. We prove the existence and uniqueness of the solution using properties of the boundary value operators and the integral equation method. For the numerical solution, we apply a collocation method and we approximate the singular integral operators using quadrature rules. We show convergence of the numerical scheme for the interior and the scattered fields both in the near- and far-field regime.
 \end{abstract}

\section{Introduction}\label{sec_intro}

The scattering problem of a time-harmonic electromagnetic wave by a infinitely long cylinder, either penetrable or not, has attracted considerable interest from researchers working in different fields, see \cite{Lee16, LucPanSche10, ShaWu16, Tsa18} for some recent applications. From a mathematical point of view, the solution of the direct problem, analytical and  numerical, needs special treatment even though it is based on classical techniques. This work adds to a series of papers dealing with direct scattering problems under similar conditions \cite{GinMin16, Min18, NakWan13, WanNak12}.

The original problem is formulated in three dimensions for an inhomogeneous scatterer, parallel to one axis. The inhomogeneity of the scatterer is described by piecewise constant electric and magnetic material parameters. Here, we deal with a two-layered medium but the proposed method can be generalized to more layers.  This setup generalizes the homogeneous case and it can be seen as a first step towards dealing with problems for inhomogeneous cylinders.

Initially, the interactions between the electric and magnetic fields inside the cylinder are described by a system of Maxwell equations in every homogeneous sub-domain. Then, the symmetry and the properties of the cylinder reduces the set of equations to a system of two-dimensional Helmholz equations only for the third components of the fields. The drawback of this dimension reduction is the complexity appearing at the new transmission boundary conditions evolving now combinations of the normal and the tangential derivatives of the fields. 

To prove uniqueness we consider Green's theorem and Rellich's Lemma for the exterior fields and we formulate an equivalent interior boundary value problem that satisfies the Shapiro-Lopatinskij condition. In addition, this boundary value problem is normal and self-adjoint resulting to a boundary value operator with real and discrete spectrum \cite{Wloka}. The existence of solutions follows from the integral representation of the solution and the Riesz-Fredholm theory. The boundary integral equation method for solving transmission problems for the Helmholtz equation has been extensively investigated, see \cite{CosSte85, KleMar88} for some early works and \cite{CakKre17, Hsi11, Yin17} for some recent examples. We search the solution combining the direct (Green's formula) and the indirect (single layer ansatz) methods. Then, the fields solve the direct problem if the unknown densities are solutions of a system of boundary integral equations. The system consists of integral operators with singular kernels. 

For the numerical implementation we first handle the singularity of the integral operators using standard decompositions and quadrature rules \cite{Kre14}. Then, we collocate the system of equations at equidistant grid  points resulting to a well-conditioned linear system, reflecting the well-posedness of the direct problem. 

This paper is organized as follows. In \autoref{sec_problem}, we formulate the direct scattering problem, the governing equations, the transmission boundary conditions and the radiation conditions for the scattered fields. The well-posedness of the derived problem is investigated in \autoref{sec_unique}. We first show uniqueness and then existence using the mapping properties of the boundary value operator. We use results from \cite{Wloka} to write the boundary value problem as an initial value problem. Then using \autoref{Appendix}, we prove that  the Shapiro-Lopatinskij condition is satisfied on the boundaries. In the last section, we present different numerical results.  We check the correctness of the proposed numerical scheme using test examples with analytic solutions and then we consider examples modeling scattering problems with obliquely incident waves.

\section{Formulation of the problem}\label{sec_problem}

The cylinder $\Omega_{int}\subset\mathbb{R}^3$ is infinitely long and parallel to the $z-$axis. It is piecewise constant inhomogeneous and admits the form $\Omega_{int}:=\Omega_{int}^1 \cup \Omega_{int}^2,$ with $\Omega_{int}^1\cap\Omega_{int}^2=\emptyset.$ The exterior domain $\Omega_{ext} = \mathbb{R}^3 \setminus \overline{\Omega_{int}}$ is an unbounded homogeneous medium characterized by the electric permittivity $\epsilon_0$ and the magnetic permeability $\mu_0.$ Each layer $\Omega^j_{int}, \, j = 1,2$ is bounded and homogeneous with 
constant material properties $\epsilon_j$ and $\mu_j ,$ respectively. The boundary $\partial \Omega$ is sufficiently smooth and consists of two disjoint surfaces $\partial\Omega_0$ and $\partial\Omega_1,$ with $\partial\Omega_1$ being in the interior of $\partial\Omega_0.$

Then, the exterior electric and magnetic fields $\Ee , \He : \Omega_{ext} \rightarrow \C^3,$  and the interior fields $\Eone , \Hone : \Omega^1_{int} \rightarrow \C^3,$ and $\Etwo , \Htwo : \Omega^2_{int} \rightarrow \C^3,$ satisfy  the system of Maxwell equations 
\begin{equation}\label{eq_Maxwell}
  \begin{aligned}
\curl \Ee - i \omega \mu_0  \He  &= 0, & \curl \He + i \omega \epsilon_0 \Ee &= 0,  & &\mbox{in  } \Omega_{ext} ,\\
\curl \Eone - i \omega \mu_1  \Hone &= 0, & \curl \Hone + i \omega \epsilon_1  \Eone  &= 0,  
&  &\mbox{in  }  \Omega^1_{int} , \\
\curl \Etwo - i \omega \mu_2  \Htwo &= 0, & \curl \Htwo + i \omega \epsilon_2  \Etwo  &= 0,  
& &\mbox{in  }  \Omega^2_{int} ,
\end{aligned}
\end{equation}
where $\omega >0$ is the frequency. At the boundary $\partial \Omega$ we impose transmission conditions of the form
\begin{equation}\label{bound_cond}
  \begin{aligned}
\hat\n \times \Eone  &=   \hat\n \times \Ee,  & \hat\n \times \Hone  &=   \hat\n \times \He,  & \mbox{on  } \partial \Omega_0 , \\
\hat\n \times \Eone  &=   \hat\n \times \Etwo, & \hat\n \times \Hone &=   \hat\n \times \Htwo, & \mbox{on  } \partial \Omega_1,
\end{aligned}
\end{equation}
where $\hat\n$ is the unit normal vector.

The scatterer is illuminated by a time-harmonic obliquely incident wave, meaning a transverse magnetic polarized electromagnetic plane wave. The cylindrical symmetry of the medium and its specific structure allows for reduction of the 3D scattering problem \eqref{eq_Maxwell} -- \eqref{bound_cond} to a 2D problem only for the $z-$components of the fields, see for instance \cite{GinMin16, NakWan13, WanNak12}.

We define by $\theta\in (0,\pi)$ the incident angle with respect to the negative $z-$axis and $\phi\in[0,2\pi]$ the polar angle of the incident direction. We set $k_0=\omega\sqrt{\mu_0\epsilon_0}$ the wave number of $\Omega_{ext}$ and we define by $k_j=\omega\sqrt{\mu_j\epsilon_j},$ the wave number of $\Omega_{int}^j, \, j=1,2.$ We assume 
 $k_1\neq k_2$. Let $\beta=k_0\cos\theta,$ then we define $ \kappa_j^2=k_j^2-\beta^2,\, j=0,1,2$. The coefficients are chosen such that $\kappa_1^2,\kappa_2^2>0$.  We denote by $\Omega_0$ the horizontal cross section of the exterior domain and by  $\Omega$ the horizontal cross section of the cylinder, given by $\Omega = \Omega_1 \cup \Omega_2.$ The simply connected domain $\Omega_2$ has a $C^2$ closed boundary $\Gamma_1$ and the doubly connected domain $\Omega_1$ admits the outer smooth boundary $\Gamma_0.$

We set $\b x =(x,y)\in\mathbb{R}^2$. We define by  $e^{ext}( \b x ), \, h^{ext}(\b x ),\,\b x\in\Omega_0$ the $z-$components of the
exterior electric and magnetic fields, respectively. The fields $e^j (\b x)$ and $ h^j (\b x )$ describe the  $z-$components of the electric and magnetic  interior fields, for $\b x\in\Omega_j , \, j =1,2,$ respectively. Following \cite{WanNak12}, we know that the fields satisfy the system of Helmholtz equations
\begin{equation}\label{helm}
\begin{aligned}
\Delta \ee + \kappa^2_0 \,\ee &= 0,  &
\Delta \he + \kappa^2_0 \,\he &= 0,  & \mbox{in  } \Omega_0 , \\
\Delta e^1 + \kappa^2_1 \,e^1 &= 0, &
\Delta h^1 + \kappa^2_1 \,h^1 &= 0, & \mbox{in  } \Omega_1 ,\\
\Delta e^2 + \kappa^2_2 \,e^2 &= 0, &
\Delta h^2 + \kappa^2_2 \,h^2 &= 0, & \mbox{in  } \Omega_2 .
\end{aligned}
\end{equation}

The boundary conditions \eqref{bound_cond} can also be rewritten in the two-dimensional setting \cite{GinMin16, WanNak12}. Let $\b n = (n_1 , n_2)$ and $\bm \tau = (-n_2 , n_1)$ be the normal and tangent vector on $\Gamma _j , \, j=0,1,$ respectively. The vector $\b n$ on $\Gamma_j$ points into $\Omega_j , \, j=0,1.$
We define $\tfrac{\partial}{\partial n } = \b n \cdot \nabla, \, \tfrac{\partial}{\partial \tau } = \bm \tau \cdot \nabla ,$ 
where $\nabla$ is the two-dimensional gradient and we set
 $$\tilde\mu_j = \frac{ \mu_j }{ \kappa_j^2} , \quad \tilde\epsilon_j = \frac{\epsilon_j }{ \kappa_j^2} , \quad \beta_j = \frac{\beta }{ \kappa_j^2 }, \quad \mbox{for}\quad  j=0,1,2.$$

The transmission conditions \eqref{bound_cond} take the form
\begin{equation}\label{boundary_ext}
\begin{aligned}
e^1 &= \ee , & \mbox{on   } \Gamma_0 , \\
\tilde\mu_1 \omega \frac{\partial h^1}{\partial n }  + \beta_1 \frac{\partial e^1}{\partial \tau } &= \tilde\mu_0 \omega \frac{\partial \he}{\partial n }  + \beta_0 \frac{\partial \ee}{\partial \tau }, & \mbox{on   } \Gamma_0 , \\
h^1 &= \he , & \mbox{on   } \Gamma_0 , \\
\tilde\epsilon_1 \omega \frac{\partial e^1}{\partial n }  - \beta_1 \frac{\partial h^1}{\partial \tau } &= \tilde\epsilon_0 \omega \frac{\partial \ee}{\partial n }  - \beta_0 \frac{\partial \he}{\partial \tau }, & \mbox{on   } \Gamma_0 , \\
e^1 &= e^2 , & \mbox{on   } \Gamma_1 , \\
\tilde\mu_1 \omega \frac{\partial h^1}{\partial n }  + \beta_1 \frac{\partial e^1}{\partial \tau } &= \tilde\mu_2 \omega \frac{\partial h^2}{\partial n }  + \beta_2 \frac{\partial e^2}{\partial \tau }, & \mbox{on   } \Gamma_1 , \\
h^1 &= h^2 , & \mbox{on   } \Gamma_1 , \\
\tilde\epsilon_1 \omega \frac{\partial e^1}{\partial n }  - \beta_1 \frac{\partial h^1}{\partial \tau } &= \tilde\epsilon_2 \omega \frac{\partial e^2}{\partial n }  - \beta_2 \frac{\partial h^2}{\partial \tau }, & \mbox{on   } \Gamma_1 .
\end{aligned}
\end{equation}

The exterior fields consist of the incident fields $e^{inc}, \, h^{inc}$ and the scattered fields $e^0, h^0,$ meaning $e^{ext}=e^{inc}+e^0$ and $h^{ext}=h^{inc}+h^0$. The incident wave reduces to the fields
\begin{equation}\label{incident}
e^{inc} (\b x) = \frac1{\sqrt{\epsilon_0}} \sin \theta \, \e^{i\kappa_0 \,\b x \cdot (\cos \phi, \,\sin \phi )}, \quad
h^{inc} (\b x) = 0.
\end{equation}
The scattered fields must satisfy the Sommerfeld radiation condition
\begin{equation}\label{radiation}
\begin{aligned}
\lim_{r \rightarrow \infty} \sqrt{r} \left( \frac{\partial e^0}{\partial r} - i\kappa_0 e^0 \right) =0 , \quad
 \lim_{r \rightarrow \infty} \sqrt{r} \left( \frac{\partial h^0}{\partial r} - i\kappa_0 h^0 \right) =0 , \\
\end{aligned}
\end{equation}   
where $r = |\b x |,$ uniformly over all directions.  This radiation condition results to the following asymptotic behavior \cite{ColKre13}
\begin{equation*}%\label{far}
e^0  (\b x ) = \frac{\e^{i\kappa_0 r }}{\sqrt{r}} e^\infty (\b{\hat{x}}) + \mathcal{O} ( r^{-3/2}) , \quad h^0  (\b x ) = \frac{\e^{i\kappa_0 r }}{\sqrt{r}} h^\infty (\b{\hat{x}}) + \mathcal{O} (r^{-3/2}),
\end{equation*}
where $\b{\hat{x}} = \b x / r \in S,$ with $S$ being the unit circle. The pair $(e^\infty, h^\infty)$ is called the far-field pattern of the scattered wave.

\section{Well-posedness of the problem}\label{sec_unique}
In this section we study the well-posedness of the direct scattering problem \eqref{helm} -- \eqref{radiation}. We first address the problem of unique solvability. To do so, we prove the Shapiro-Lopatinskij condition for the following interior transmission problem. For this, we have to exclude a certain discrete set of wavenumbers $\kappa_1$ and $\kappa_2$ in $\Omega_{1,2}:=\Omega_1\cup\overline{\Omega}_2$. 

We set the piecewise constant density
\[
p( \b x)=
\begin{cases}
1,  & \b x \in \Omega_1,\\
\kappa_2^2/ \kappa_1^2 ,  & \b x \in \Omega_2,
\end{cases}
\]
and we define the operator $\b A = - ( \Delta+\lambda p ) \b I_2,$ for $\lambda\in \R,$ where $\b I_2$ is the $2\times 2$ identity matrix. We consider the following Dirichlet eigenvalue problem 
\begin{equation}\label{bounvalprob}
\begin{aligned}
 \b A \b u^j &=0, &\mbox{in }  \Omega_j ,\\
\b M \b u^1 &=0, &\mbox{on }\Gamma_0 , \\ 
\b M^1 \b u^1- \b M^2 \b u^2 &=0, &\mbox{on }  \Gamma_1 ,
\end{aligned}
\end{equation}
where $\b u^j = (e^j ,\, h^j )^\top, \, j=1,2.$ The boundary operators are given by
   \begin{equation*} 
   \b M =\b I_2, \quad \mbox{on }\Gamma_0,\quad \mbox{and} \quad
\b M^j = \begin{pmatrix}
 1 & 0 \\
 \beta_j \frac{\partial}{\partial\tau} & \tilde{\mu}_j \omega \frac{\partial}{\partial n}\\ 
 0 & 1  \\
 \tilde{\epsilon}_j \omega \frac{\partial}{\partial n} & -\beta_j \frac{\partial}{\partial\tau}
   \end{pmatrix}, \quad \mbox{on }\Gamma_1.
\end{equation*}

The interior eigenvalue problem \eqref{bounvalprob} is a system with a Dirichlet condition on the exterior boundary $\Gamma_0$ and a special transmission condition on the interior boundary $\Gamma_1$. To prove ellipticity and then discreteness of the eigenvalues for this interior problem it is enough to show that the operator $ \b A$ is elliptic, properly elliptic and the Shapiro-Lopatinskij condition is satisfied on both boundaries. First, we prove that the operator $\b A$ is elliptic and properly elliptic.

We observe that the principal symbol of the operator $\b A$ is given by
\begin{equation*}
\b A_0 = \begin{pmatrix}
 \xi_1^2+\xi_2^2 & 0 \\
   0 & \xi_1^2+\xi_2^2
   \end{pmatrix}. 
   \end{equation*}
   The operator $\b A$ is elliptic since $\det \b A_0=(\xi_1^2+\xi_2^2)^2\neq 0,$ for all $\bm \xi=(\xi_1,\xi_2)\in\mathbb{R}^2 \setminus \{0\}$  \cite[Definition 9.21]{WloRowLaw}. In addition, the operator $\b A$ is properly elliptic since the determinant of $\b A_0$ has the root $\xi_2=i|\xi_1|$ of multiplicity two in the upper half plane $\Im \xi_2>0,$ and the root $\xi_2=-i|\xi_1|$ of multiplicity two in the lower half plane $\Im \xi_2<0$ \cite[Definition 9.24]{WloRowLaw}. Next, we show that the Shapiro-Lopatinskij condition is satisfied on the boundaries $\Gamma_0$ and $\Gamma_1$, so that the boundary value problem \eqref{bounvalprob} is elliptic. The necessary results are summarized in  \autoref{Appendix}.
 
\begin{proposition}\label{prop_ell}
If $\mu_1 \neq \mu_2,$ then the eigenvalue problem \eqref{bounvalprob} is elliptic and the set of eigenvalues is discrete. 
\end{proposition}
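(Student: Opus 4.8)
The operator $\b A$ has already been shown to be elliptic and properly elliptic, so to obtain ellipticity of the boundary value problem \eqref{bounvalprob} it remains only to verify the Shapiro--Lopatinskij condition at every point of $\Gamma_0$ and of $\Gamma_1$; the precise criterion I use is the one collected in \autoref{Appendix}. The plan is the standard one: freeze the coefficients at a boundary point, flatten the boundary so that $\b n=(0,1)$ and $\bm\tau=(-1,0)$, and apply a tangential Fourier transform, $\partial/\partial\tau\mapsto\i\xi$ and $\partial/\partial n\mapsto\partial_t$, with $\xi\in\R\setminus\{0\}$. Since the principal symbol is $\b A_0=(\xi_1^2+\xi_2^2)\b I_2$, the resulting half-line system decouples into two copies of $(\xi^2-\partial_t^2)v=0$, whose decaying solutions are spanned by $\e^{-\abs{\xi}t}$. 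Hence on each side the admissible (decaying) solutions form a two-dimensional space $\b u=(a,b)^\top\e^{-\abs{\xi}t}$; the double root of $\det\b A_0$ contributes no polynomial modes because the system is diagonal. On $\Gamma_0$, where $\b M=\b I_2$ is a pure Dirichlet condition, a decaying solution with vanishing trace $(a,b)^\top=0$ must vanish identically, so the Shapiro--Lopatinskij condition holds there trivially and independently of the coefficients; the substantive case is $\Gamma_1$.

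For $\Gamma_1$ I would insert the decaying trial solutions $\b u^1=(a_1,b_1)^\top\e^{-\abs{\xi}t}$ on the $\Omega_1$ side ($t>0$, into which $\b n$ points) and $\b u^2=(a_2,b_2)^\top\e^{+\abs{\xi}t}$ on the $\Omega_2$ side ($t<0$) into the transmission relation $\b M^1\b u^1-\b M^2\b u^2=0$. The two zeroth-order rows of $\b M^j$ (the first and third) give $a_1=a_2$ and $b_1=b_2$, which reduces the $4\times4$ Lopatinskij system coming from the four transmission conditions to a $2\times2$ homogeneous system for $(a,b)$: its matrix couples the tangential contributions $\i\xi(\beta_1-\beta_2)$ on the diagonal with the normal contributions carrying $\tilde\mu_j$ and $\tilde\epsilon_j$ off the diagonal. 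The Shapiro--Lopatinskij condition on $\Gamma_1$ is then equivalent to the nonvanishing, for all $\xi\neq0$, of the determinant of this $2\times2$ matrix.

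The heart of the argument, and the step I expect to be most delicate, is the algebraic simplification of this determinant. After substituting $\tilde\mu_j=\mu_j/\kappa_j^2$, $\tilde\epsilon_j=\epsilon_j/\kappa_j^2$, $\beta_j=\beta/\kappa_j^2$, clearing the denominator $\kappa_1^4\kappa_2^4$, and repeatedly eliminating $\beta$ by means of $\kappa_j^2=k_j^2-\beta^2$ and $k_j^2=\omega^2\mu_j\epsilon_j$, I expect all purely $\beta$-dependent terms to cancel and the determinant to collapse to a constant multiple of a material-contrast factor built from $\mu_1,\mu_2,\epsilon_1,\epsilon_2$, divided by $\kappa_1^2\kappa_2^2$. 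Carrying these $\beta^2$ cancellations through correctly is the only real obstacle; once this is done, the hypothesis $\mu_1\neq\mu_2$, together with the standing assumptions $k_1\neq k_2$ and $\kappa_1^2,\kappa_2^2>0$, is precisely what forces this factor, and hence the determinant, to be nonzero for every $\xi\neq0$. This establishes the Shapiro--Lopatinskij condition on $\Gamma_1$.

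With the Shapiro--Lopatinskij condition verified on $\Gamma_0$ and $\Gamma_1$ and proper ellipticity already in hand, the boundary value problem \eqref{bounvalprob} is elliptic. Since it is moreover normal and formally self-adjoint, I would conclude by invoking the spectral result of \cite{Wloka}: such a problem has a compact resolvent, so its spectrum is real and discrete. The admissible eigenvalues $\lambda$ therefore form a discrete set, which is the assertion of the proposition.
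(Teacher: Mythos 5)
Your overall strategy is legitimate and genuinely different from the paper's: you verify the Shapiro--Lopatinskij condition on $\Gamma_1$ directly from its definition (no nontrivial solution of the frozen-coefficient half-line ODE system decaying into both sides), whereas the paper uses the algebraic criterion of \autoref{Appendix} --- congruences modulo $a^{\pm}$ for linear combinations of the rows of $\b M_0^j\b A_{co}$ --- which ends in a $4\times 4$ determinant in the coefficients $a_1,\dots,a_4$. Your treatment of $\Gamma_0$, your observation that the diagonal structure of $\b A_0$ excludes polynomial modes despite the double roots, and your reduction of the four interface conditions to a $2\times 2$ system via the zeroth-order rows are all correct. The problem is that the step you yourself call the heart of the argument --- the evaluation of the $2\times 2$ determinant --- is never actually performed, and the outcome you announce for it is not what your own setup produces.

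Carrying out your computation: with $\b n$ pointing into $\Omega_1$, the admissible modes are $\b u^1=(a,b)^{\top}\e^{-|\xi| t}$ on the $\Omega_1$ side and $\b u^2=(a,b)^{\top}\e^{+|\xi| t}$ on the $\Omega_2$ side, so the two normal derivatives at $t=0$ carry \emph{opposite} signs, $\partial_n \b u^1=-|\xi|(a,b)^{\top}$ and $\partial_n \b u^2=+|\xi|(a,b)^{\top}$. Hence in the first-order transmission rows the $\tilde\mu_j$ and $\tilde\epsilon_j$ contributions \emph{add} rather than subtract, and the system is
\begin{equation*}
\begin{pmatrix} i\xi(\beta_1-\beta_2) & -\omega|\xi|(\tilde\mu_1+\tilde\mu_2)\\ -\omega|\xi|(\tilde\epsilon_1+\tilde\epsilon_2) & -i\xi(\beta_1-\beta_2)\end{pmatrix}\begin{pmatrix}a\\ b\end{pmatrix}=\begin{pmatrix}0\\ 0\end{pmatrix},
\end{equation*}
whose determinant, simplified with $\tilde\mu_j\tilde\epsilon_j\omega^2-\beta_j^2=1/\kappa_j^2$ and $\kappa_j^2+\beta^2=\omega^2\mu_j\epsilon_j$, is
\begin{equation*}
\xi^2\left[(\beta_1-\beta_2)^2-\omega^2(\tilde\mu_1+\tilde\mu_2)(\tilde\epsilon_1+\tilde\epsilon_2)\right]
=-\frac{\omega^2\xi^2}{\kappa_1^2\kappa_2^2}\,(\mu_1+\mu_2)(\epsilon_1+\epsilon_2).
\end{equation*}
So in your framework the nonvanishing is governed by $(\mu_1+\mu_2)(\epsilon_1+\epsilon_2)$, which is automatic for positive material parameters; your claim that ``$\mu_1\neq\mu_2$ is precisely what forces the determinant to be nonzero'' is false, and the hypothesis of the proposition would play no role in your proof. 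This also puts your route in direct conflict with the paper's result, whose $4\times4$ determinant is stated to be $\frac{\omega^2|\xi_1|^2}{\kappa_1^2\kappa_2^2}(\mu_1-\mu_2)(\epsilon_1+\epsilon_2)$. Since the two formulations of the Shapiro--Lopatinskij condition are equivalent, this sum-versus-difference discrepancy signals an orientation/sign error in one of the two computations, which a complete proof must locate and resolve; a useful calibration is the scalar model transmission problem $u^1=u^2$, $\mu_1\partial_n u^1=\mu_2\partial_n u^2$, whose classical ellipticity condition is $\mu_1+\mu_2\neq 0$, i.e.\ the ``sum'' form. By omitting the computation entirely, your proposal leaves exactly this --- the only substantive content of the proposition --- unestablished.
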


\begin{proof}
We have seen that the operator $\b A$ is elliptic and properly elliptic for all $\b x\in\Omega_1\cup\Omega_2$. Then, it is enough to show that the Shapiro-Lopatinskij condition is satisfied on $\Gamma_0$ and $\Gamma_1$ \cite{WloRowLaw}.

Let $\b M_0 = \b I_2$ be the principal symbol of $\b M$ and $\b A_{co} = \b A_0$ the cofactor matrix of $\b A_0.$ From \autoref{equiv}, we see that the Shapiro-Lopatinskij condition is satisfied on $\Gamma_0$ since the rows of the matrix $\b M_0\b A_{co}$ are linearly independent. 

To show the condition on $\Gamma_1,$ we choose a coordinate system such that $\b x_1=(0,0) \in\Gamma_1$ with 
 unit normal and tangent vectors $\b n_1=(0,-1)$ and $\bm \tau_1=(1,0),$ at $\b x_1,$ respectively. In this coordinate system, the principal symbols of the operators are given by \cite{WloRowLaw}
\begin{equation*}
\b M^j_0 = \begin{pmatrix}
 1 & 0 \\
 i\beta_j \xi_1 & i\tilde{\mu}_j\omega\xi_2 \\
 0 & 1 \\ 
 i\tilde{\epsilon}_j \omega\xi_2 & -i\beta_j \xi_1 
   \end{pmatrix}.  
\end{equation*}

To verify the Shapiro-Lopatinskij condition on $\Gamma_1$, we have to show that 
\begin{equation*}
\b M_0^1 \b A_{co} \equiv 0(\mbox{mod}\, a^-), \quad \mbox{and} \quad
\b M_0^2 \b A_{co} \equiv 0(\mbox{mod}\, a^+), 
\end{equation*} 
where $\alpha^{+}(\xi_1,\xi_2)=(\xi_2- i|\xi_1|)^2$ and $\alpha^{-}(\xi_1,\xi_2)=(\xi_2+ i|\xi_1|)^2$ \cite{RoitShef, Sheftel}.

Following \cite{NakWan13}, we assume that there exist four constants $a_1,\,a_2,\, a_3$ and $a_4$ such that 
 \begin{subequations}
 \begin{alignat}{2}
(\xi_1^2+\xi_2^2)\left[a_1\begin{pmatrix}
   1\\ 
   0
   \end{pmatrix}+a_2\begin{pmatrix}
   -i\beta_1\xi_1 \\ 
   i\tilde{\mu}_1\omega\xi_2 
   \end{pmatrix}+a_3\begin{pmatrix}
   0\\ 
   1
    \end{pmatrix}+a_4\begin{pmatrix}
    i\tilde{\epsilon}_1\omega\xi_2 \\
     i\beta_1\xi_1 
       \end{pmatrix}
    \right] & \equiv 0(\mbox{mod}\,a^{-}), \label{eq_system1}\\
    (\xi_1^2+\xi_2^2)\left[a_1\begin{pmatrix}
   1\\ 
   0
   \end{pmatrix}+a_2\begin{pmatrix}
   -i\beta_2\xi_1 \\ 
   i\tilde{\mu}_2\omega\xi_2 
   \end{pmatrix}+a_3\begin{pmatrix}
   0\\ 
   1
    \end{pmatrix}+a_4\begin{pmatrix}
    i\tilde{\epsilon}_2\omega\xi_2 \\
     i\beta_2\xi_1 
       \end{pmatrix}
    \right] & \equiv 0(\mbox{mod}\,a^{+}). \label{eq_system2}
\end{alignat} 
\end{subequations}
where each line of the equations \eqref{eq_system1} -- \eqref{eq_system2} have remainder zero, when they are divided by $a^-$ and $a^+$ , respectively. From \eqref{eq_system1}, we see that there exist  two polynomials $p(\xi_2)=p_1\xi_2+p_2$, and $q(\xi_2)=q_1\xi_2+q_2$ satisfying 
\begin{multline}\label{eq_coef}
(\xi_2+i|\xi_1|)(\xi_2-i|\xi_1|)\left[a_1\begin{pmatrix}
   1\\ 
   0
   \end{pmatrix}+a_2\begin{pmatrix}
   -i\beta_1\xi_1 \\ 
   i\tilde{\mu}_1\omega\xi_2 
   \end{pmatrix}+a_3\begin{pmatrix}
   0\\ 
   1
    \end{pmatrix}+a_4\begin{pmatrix}
    i\tilde{\epsilon}_1\omega\xi_2 \\
     i\beta_1\xi_1 
       \end{pmatrix}
    \right] \\
    =   \begin{pmatrix}
   p_1\xi_2+p_2\\ 
   q_1\xi_2+q_2
   \end{pmatrix}(\xi_2+i|\xi_1|)^2.
    \end{multline}
    
Comparing the coefficients in both sides of the first equation, we have
\[
p_1=ia_4\tilde{\epsilon}_1\omega,\quad p_2=-a_1+ia_2\beta_1\xi_1, \quad \mbox{and} \quad
p_2+ip_1|\xi_1|=a_1-ia_2\beta_1\xi_1+a_4\tilde{\epsilon}_1\omega|\xi_1|,
\]
resulting to 
\begin{equation*}%\label{system1a}
a_1-ia_2\beta_1\xi_1+a_4\tilde{\epsilon}_1\omega|\xi_1|=0.
\end{equation*}
Similarly, from the second equation in \eqref{eq_coef} we have
\begin{equation*}%\label{system2}
a_2\tilde{\mu}_1\omega|\xi_1|+a_3+ia_4\beta_1\xi_1=0. 
\end{equation*}
Following the same steps, from the system \eqref{eq_system2}, we get
\begin{align*}
a_1-ia_2\beta_2\xi_1-a_4\tilde{\epsilon}_2\omega|\xi_1| &=0,\\
a_2\tilde{\mu}_2\omega|\xi_1|-a_3-ia_4\beta_2\xi_1 &=0. 
\end{align*}
The last four equations result to the system
\[
\begin{pmatrix}
1 & -i\beta_1\xi_1& 0 & \tilde{\epsilon_1}\omega|\xi_1|\vspace{0.1cm}\\
0 & \tilde{\mu}_1\omega|\xi_1| & 1 & i\beta_1\xi_1\vspace{0.1cm}\\
1 & -i\beta_2\xi_1 & 0 & -\tilde{\epsilon}_2\omega|\xi_1|\vspace{0.1cm}\\
0 & \tilde{\mu}_2\omega|\xi_1| & -1 & -i\beta_2\xi_1
\end{pmatrix} \begin{pmatrix}
a_1 \\ a_2 \\ a_3 \\ a_4
\end{pmatrix} = \begin{pmatrix}
0 \\ 0 \\ 0 \\ 0
\end{pmatrix},
\]
with determinant
\begin{equation*}
\begin{aligned}
 \mbox{Det} &=(\tilde{\mu}_2\tilde{\epsilon}_2\omega^2-\beta_2^2+\beta_1^2-\tilde{\mu}_1\tilde{\epsilon}_1\omega^2+\tilde{\mu}_1\tilde{\epsilon}_2\omega^2-\tilde{\mu}_2\tilde{\epsilon}_1\omega^2)|\xi_1|^2\\ &=\frac{\omega^2|\xi_1|^2}{\kappa_1^2\kappa_2^2}(\mu_1-\mu_2)(\epsilon_1+\epsilon_2). 
\end{aligned}
\end{equation*}

Since $\mu_1\neq \mu_2,$ the determinant is not identically zero implying that $a_1=a_2=a_3=a_4=0.$ Thus, the Shapiro-Lopatinskij condition is satisfied on $\Gamma_1.$ We define $\mathbbm L = (\b A, \b M, \b M^1, \b M^2)$. We note that the formally adjoint operator $\b A^*:(H^2(\Omega_j))^2\rightarrow (L_2(\Omega_j))^2,\,j=1,2$ is the same as the operator $\b A:(H^2(\Omega_j))^2\rightarrow (L_2(\Omega_j))^2,\,j=1,2$ \cite[Definition 10.1]{Wloka}. In addition, we observe that the boundary operators $\b M, \, \b M^j$ are real. Then, the interior boundary transmision problem which is described by the operator $\mathbbm L$  is self-adjoint  \cite[Definition 14.6]{Wloka} and the index of the operator  $\mathbbm L$ is zero \cite[Corollary 15.8]{Wloka}. Therefore, the boundary value problem \eqref{bounvalprob} is elliptic, the operator $\mathbb{L}$ has the Fredholm property and its spectrum exists, is real and discrete \cite{Agran79, Ray}. 
\end{proof} 

Next, we prove the uniqueness of the solution of the problem \eqref{helm} -- \eqref{radiation} using Green's theorem and Rellich's lemma.

\begin{theorem}\label{theorem1}
If $\mu_1 \neq \mu_2$ and $\kappa_1^2$ is not an interior Dirichlet eigenvalue for the domain $\Omega_{1,2}$, then the problem \eqref{helm} -- \eqref{radiation} admits at most one solution.
\end{theorem}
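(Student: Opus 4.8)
The plan is to argue by linearity, prove that the radiating exterior fields vanish by a Rellich-type energy identity, and then reduce the interior to the eigenvalue problem \eqref{bounvalprob}. If \eqref{helm}--\eqref{radiation} had two solutions, their difference solves the same system with $e^{inc}=h^{inc}=0$, so $\ee,\he$ are radiating solutions of the homogeneous Helmholtz equation in $\Omega_0$; it then suffices to show that all six fields vanish. Since $\kappa_0,\kappa_1,\kappa_2$ are real, Green's first identity in each subdomain has a real volume term $\int(|\nabla u|^2-\kappa^2|u|^2)$, so every interior boundary integral $\int\overline{u}\,\partial_n u$ has zero imaginary part. Orienting $\b n$ on $\Gamma_j$ into $\Omega_j$, the identities on $\Omega_2$ give $\Im\int_{\Gamma_1}\overline{e^2}\,\partial_n e^2=\Im\int_{\Gamma_1}\overline{h^2}\,\partial_n h^2=0$, while those on $\Omega_1$ give $\Im\int_{\Gamma_0}\overline{u^1}\,\partial_n u^1=\Im\int_{\Gamma_1}\overline{u^1}\,\partial_n u^1$ for $u^1\in\{e^1,h^1\}$. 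For the exterior, the Sommerfeld conditions \eqref{radiation} and Rellich's lemma \cite{ColKre13} force $\Im\int_{\Gamma_0}\overline{\ee}\,\partial_n\ee=c_e\ge0$ and $\Im\int_{\Gamma_0}\overline{\he}\,\partial_n\he=c_h\ge0$.

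The crucial step couples these through the transmission conditions \eqref{boundary_ext}. On each of $\Gamma_0,\Gamma_1$ I would multiply the second ($h$-flux) condition by $\overline{h}$ and the fourth ($e$-flux) condition by $\overline{e}$, add them, and use the matching Dirichlet traces. Upon taking imaginary parts the tangential $\beta$-terms disappear, since on a closed curve $\int_\Gamma(\overline{h}\,\partial_\tau e-\overline{e}\,\partial_\tau h)\,ds=-2\Re\int_\Gamma\overline{e}\,\partial_\tau h\,ds$ is real; what remains telescopes across the interfaces,
\begin{equation*}
\omega\,\Im\!\int_{\Gamma_0}\!\big(\tilde\epsilon_0\,\overline{\ee}\,\partial_n\ee+\tilde\mu_0\,\overline{\he}\,\partial_n\he\big)\,ds=\omega\,\Im\!\int_{\Gamma_1}\!\big(\tilde\epsilon_2\,\overline{e^2}\,\partial_n e^2+\tilde\mu_2\,\overline{h^2}\,\partial_n h^2\big)\,ds=0,
\end{equation*}
where the first equality uses the two flux pairings together with the $\Omega_1$-identities and the final zero is the $\Omega_2$-identity. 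Hence $\tilde\epsilon_0 c_e+\tilde\mu_0 c_h=0$; as $\kappa_0$ is real and the material constants positive, $\tilde\epsilon_0,\tilde\mu_0>0$, so $c_e=c_h=0$. Rellich's lemma then yields $\ee=\he=0$ in $\Omega_0$.

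With the exterior fields gone, the first and third conditions of \eqref{boundary_ext} give $e^1=h^1=0$ on $\Gamma_0$, i.e. $\b M\b u^1=0$, while the four conditions on $\Gamma_1$ are exactly $\b M^1\b u^1-\b M^2\b u^2=0$. Each $\b u^j=(e^j,h^j)^\top$ satisfies $\Delta\b u^j+\kappa_j^2\b u^j=0$, which is $\b A\b u^j=0$ at $\lambda=\kappa_1^2$ because $p=\kappa_2^2/\kappa_1^2$ on $\Omega_2$; thus $(\b u^1,\b u^2)$ solves \eqref{bounvalprob} with $\lambda=\kappa_1^2$. By \autoref{prop_ell}, where $\mu_1\neq\mu_2$ enters, this problem is elliptic with a discrete spectrum, so if $\kappa_1^2$ is not one of its eigenvalues the only solution is trivial and $e^1=h^1=e^2=h^2=0$. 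Therefore all fields vanish and the solution is unique.

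The main obstacle is the energy bookkeeping of the second paragraph: finding the weighted flux pairing that renders the $\beta$ cross-terms antisymmetric, hence imaginary-part free, and simultaneously makes the boundary energies telescope from $\Gamma_0$ through $\Gamma_1$ to the $\Omega_2$-integrals, all with a consistent orientation of $\b n$ across both interfaces. The remaining inputs---the sign in Rellich's lemma and the positivity of $\tilde\epsilon_0,\tilde\mu_0$---are routine once $\kappa_0$ is real.
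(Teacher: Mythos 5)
Your proof is correct and takes essentially the same route as the paper: Green's first identities in each subdomain, taking imaginary parts so that the $\beta$ tangential cross-terms drop out (being real after integration by parts on a closed curve), the radiation condition together with Rellich's lemma to annihilate the exterior fields, and finally the reduction of the remaining interior fields to the eigenvalue problem \eqref{bounvalprob}, which is handled by \autoref{prop_ell} and the assumption on $\kappa_1^2$. The differences are purely presentational: you pair the flux conditions with conjugate traces and telescope the weighted energies directly, making explicit the signs $c_e,c_h\ge 0$ and the positivity of $\tilde\epsilon_0,\tilde\mu_0$, whereas the paper reaches the identical identity by sequential substitution of \eqref{Green1e}--\eqref{Imag2} and cites \cite{GinMin16} for the final Rellich step.
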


\begin{proof}
It is sufficient to show that the homogeneous version of \eqref{helm} -- \eqref{radiation} admits only the trivial solution. 
We consider a disk $S_r$ with center $(0,0)$, radius $r>0$ and boundary $\Gamma_r$, which contains $\overline{\Omega}$. We define $\Omega_r:=S_r\setminus \overline{\Omega}$. In the following, with $\bar u$ we denote the complex conjugate of the field $u.$

We apply Green's first theorem for $e^1,\overline{e^1}$ and $h^1,\overline{h^1}$ in $\Omega_1:$ 
\begin{subequations}
  \begin{alignat}{2}
  \int_{\Gamma_0\cup\Gamma_1}e^1\frac{\partial\overline{e^1}}{\partial n}ds &=\int_{\Omega_1}(|\nabla e^1|^2-\kappa_1^2|e^1|^2)dx,  \label{Greenfirst1}\\
  \int_{\Gamma_0\cup\Gamma_1}h^1\frac{\partial\overline{h^1}}{\partial n}ds &=\int_{\Omega_1}(|\nabla h^1|^2-\kappa_1^2 |h^1|^2)dx, \label{Greenfirst2}
  \end{alignat}
\end{subequations}
and in $\Omega_2$ for the fields  $e^2,\overline{e^2}$ and $h^2,\overline{h^2}:$
\begin{subequations}
\begin{alignat}{2}
\int_{\Gamma_1}e^2\frac{\partial\overline{e^2}}{\partial n}ds &=\int_{\Omega_2}(|\nabla e^2|^2-\kappa_2^2|e^2|^2) dx, \label{Greenomega21}\\
\int_{\Gamma_1}h^2\frac{\partial\overline{h^2}}{\partial n}ds &=\int_{\Omega_2}(|\nabla h^2|^2-\kappa_2^2|h^2|^2) dx.\label{Greenomega22}
\end{alignat}
\end{subequations}
The homogeneous boundary conditions take the form
\begin{subequations}
  \begin{alignat}{2}
  e^1 &= e^0, &&\mbox{ on  } \Gamma_0 ,\label{homogenous1} \\
\tilde\mu_1 \omega \frac{\partial h^1}{\partial n }  + \beta_1 \frac{\partial e^1}{\partial \tau } &= \tilde\mu_0 \omega \frac{\partial h^0}{\partial n }  + \beta_0 \frac{\partial e^0}{\partial \tau }, &&\mbox{ on   } \Gamma_0 ,\label{homogenous2} \\
h^1 &= h^0 ,\label{homogenous3} && \mbox{ on   } \Gamma_0 , \\
\tilde\epsilon_1 \omega \frac{\partial e^1}{\partial n }  - \beta_1 \frac{\partial h^1}{\partial \tau } &= \tilde\epsilon_0 \omega \frac{\partial e^0}{\partial n }  - \beta_0 \frac{\partial h^0}{\partial \tau }, &&\mbox{ on   } \Gamma_0 ,\label{homogenous4} \\
e^1 &= e^2 ,\label{homogenous5} &&\mbox{ on   } \Gamma_1 , \\
\tilde\mu_1 \omega \frac{\partial h^1}{\partial n }  + \beta_1 \frac{\partial e^1}{\partial \tau } &= \tilde\mu_2 \omega \frac{\partial h^2}{\partial n }  + \beta_2 \frac{\partial e^2}{\partial \tau }, &&\mbox{ on   } \Gamma_1 ,\label{homogenous6} \\
h^1 &= h^2 ,\label{homogenous7} &&\mbox{ on   } \Gamma_1 , \\
\tilde\epsilon_1 \omega \frac{\partial e^1}{\partial n }  - \beta_1 \frac{\partial h^1}{\partial \tau } &= \tilde\epsilon_2 \omega \frac{\partial e^2}{\partial n }  - \beta_2 \frac{\partial h^2}{\partial \tau }, &&\mbox{ on   } \Gamma_1. \label{homogenous8}
\end{alignat}
\end{subequations}
Using \eqref{Greenfirst1} and \eqref{homogenous8} we get
\begin{equation}\label{Green1e}
\begin{aligned}
\tilde{\epsilon}_1\int_{\Gamma_0}e^1\frac{\partial\overline{e^1}}{\partial n}ds &=\tilde{\epsilon}_1\int_{\Omega_1}(|\nabla e^1|^2-\kappa_1^2 |e^1|^2)dx+\tilde{\epsilon}_1\int_{\Gamma_1}e^1\frac{\partial\overline{e^1}}{\partial n}ds \\
&=\tilde{\epsilon}_1\int_{\Omega_1}(|\nabla e^1|^2-\kappa_1^2 |e^1|^2)dx+\int_{\Gamma_1}e^1\left(\tilde{\epsilon}_2\frac{\partial\overline {e^2}}{\partial n}+\frac{\beta_1}{\omega}\frac{\partial\overline{h^1}}{\partial\tau}-\frac{\beta_2}{\omega}\frac{\partial\overline{h^2}}{\partial\tau}\right)ds.
\end{aligned}
\end{equation}
Similarly, equations \eqref{Greenfirst2} and \eqref{homogenous6} result to
\begin{equation}\label{Green1m}
\begin{aligned}
\tilde{\mu}_1\int_{\Gamma_0}h^1\frac{\partial\overline{h^1}}{\partial n}ds &=\tilde{\mu}_1\int_{\Omega_1}(|\nabla h^1|^2-\kappa_1^2 |h^1|^2)dx+\tilde{\mu}_1\int_{\Gamma_1}h^1\frac{\partial\overline{h^1}}{\partial n}ds\\
&=\tilde{\mu}_1\int_{\Omega_1}(|\nabla h^1|^2-\kappa_1^2 |h^1|^2)dx+\int_{\Gamma_1}h^1\left(\tilde{\mu}_2\frac{\partial\overline{h^2}}{\partial n}+\frac{\beta_2}{\omega}\frac{\partial\overline {e^2}}{\partial\tau}-\frac{\beta_1}{\omega}\frac{\partial\overline{e^1}}{\partial\tau}\right)ds.
\end{aligned}
\end{equation}
Applying again Green's first identity and using the boundary condition \eqref{homogenous4} for the exterior fields $e^0$ and $\overline{e^0}$ in $\Omega_r$, we derive
\begin{equation}\label{Imag1}
\begin{aligned}
\tilde{\epsilon}_0\int_{\Gamma_r}e^0\frac{\partial\overline{e^0}}{\partial n}ds &=\tilde{\epsilon}_0\int_{\Omega_r}(|\nabla e^0|^2-\kappa_0^2 |e^0|^2)dx+\tilde{\epsilon}_0\int_{\Gamma_0}e^0\frac{\partial\overline{e^0}}{\partial n}ds\\
&=\tilde{\epsilon}_0\int_{\Omega_r}(|\nabla e^0|^2-\kappa_0^2 |e^0|^2)dx+\int_{\Gamma_0}e^0\left(\tilde{\epsilon}_1\frac{\partial\overline {e^1}}{\partial n}-\frac{\beta_1}{\omega}\frac{\partial\overline h^1}{\partial\tau}+\frac{\beta_0}{\omega}\frac{\partial\overline{h^0}}{\partial\tau}\right)ds.
\end{aligned}
\end{equation}
The exterior magnetic fields and \eqref{homogenous2} gives
\begin{equation}\label{Imag2}
\begin{aligned}
\tilde{\mu}_0\int_{\Gamma_r}h^0\frac{\partial\overline{h^0}}{\partial n}ds &=\tilde{\mu}_0\int_{\Omega_0}(|\nabla h^0|^2-\kappa_0^2 |h^0|^2)dx+\tilde{\mu}_0\int_{\Gamma_0}h^0\frac{\partial\overline{h^0}}{\partial n}ds\\
&=\tilde{\mu}_0\int_{\Omega_r}(|\nabla h^0|^2-\kappa_0^2 |h^0|^2)dx+\int_{\Gamma_0}h^0\left(\tilde{\mu}_1\frac{\partial\overline{h^1}}{\partial n}+\frac{\beta_1}{\omega}\frac{\partial\overline {e^1}}{\partial\tau}-\frac{\beta_0}{\omega}\frac{\partial\overline{e^0}}{\partial\tau}\right)ds.
\end{aligned}
\end{equation}
The imaginary part of \eqref{Imag1} using \eqref{Greenomega21}, \eqref{homogenous1}, \eqref{homogenous5} and \eqref{Green1e}  is given by
\begin{equation*}
\begin{aligned}
\Im\left(\tilde{\epsilon}_0\int_{\Gamma_r}e^0\frac{\partial\overline{e^0}}{\partial n}ds\right) &=\Im\left(-\frac{\beta_1}{\omega}\int_{\Gamma_0} e^1 \frac{\partial\overline {h^1}}{\partial\tau}ds+\frac{\beta_0}{\omega}\int_{\Gamma_0}e^0 \frac{\partial\overline{h^0}}{\partial\tau}ds\right)\\
&\phantom{=}+\Im\left(\frac{\beta_1}{\omega}\int_{\Gamma_1} e^1 \frac{\partial\overline {h^1}}{\partial\tau}ds-\frac{\beta_2}{\omega}\int_{\Gamma_1}e^2 \frac{\partial\overline{h^2}}{\partial\tau}ds \right).
\end{aligned}
\end{equation*}
Again, the imaginary part of \eqref{Imag2} using \eqref{Greenomega22}, \eqref{homogenous3}, \eqref{homogenous7} and \eqref{Green1m}, takes the form
\begin{equation*}
\begin{aligned}
\Im\left(\tilde{\mu}_0\int_{\Gamma_r}h^0\frac{\partial\overline{h^0}}{\partial n}ds\right) &=\Im\left(\frac{\beta_1}{\omega}\int_{\Gamma_0} h^1 \frac{\partial\overline {e^1}}{\partial\tau}ds-\frac{\beta_0}{\omega}\int_{\Gamma_0}h^0 \frac{\partial\overline{e^0}}{\partial\tau}ds\right)\\
&\phantom{=}+\Im\left(\frac{\beta_2}{\omega}\int_{\Gamma_1} h^2 \frac{\partial\overline {e^2}}{\partial\tau}ds-\frac{\beta_1}{\omega}\int_{\Gamma_1}h^1 \frac{\partial\overline{e^1}}{\partial\tau}ds \right).
\end{aligned}
\end{equation*}
Using that
\begin{equation*}
\begin{aligned}
-\int_{\Gamma_j}e^k\dfrac{\partial\overline{h^k}}{\partial\tau}ds=\overline{\int_{\Gamma_j}h^k\dfrac{\partial\overline{e^k}}{\partial\tau}ds}\quad k=0,1,2,\quad j=0,1,
\end{aligned}
\end{equation*}
we obtain from the last two equations
\begin{equation*}
\begin{aligned}
\Im\left(\tilde{\epsilon}_0\int_{\Gamma_r}e^0\dfrac{\partial\overline{e^0}}{\partial n}ds+\tilde{\mu}_0\int_{\Gamma_r}h^0\dfrac{\partial\overline{h^0}}{\partial n}ds\right)=0.
\end{aligned}
\end{equation*}
This equation,together with the radiation condition \eqref{radiation} as $r\rightarrow\infty$, and Rellich's Lemma,
results to $e^0=h^0=0,$ in $\Omega_0$ and therefore $e^0=h^0=0,$ on $\Gamma_0$
\cite{GinMin16}.

Now we have to show that also the interior fields are identical zero. The interior problem admits the form \eqref{bounvalprob}. Thus, from \autoref{prop_ell} and the assumption that $\lambda=\kappa_1^2$ is not an interior Dirichlet eigenvalue in $\Omega_{1,2}$, we obtain that $\b u^j =0,$ in $\Omega_j,$ for $j=0,1,$ resulting to $e^1 = h^1=0,$ in $\Omega_1$ and $e^2 = h^2=0,$ in $\Omega_2$. This completes the proof.
\end{proof}

For the next theorem, we need the integral representation of the solution. Thus, we present the fundamental solution of the Helmholtz equation 
\begin{equation*}
\begin{aligned}
\Phi_j({\bf{x}},{\bf{y}})=\dfrac{i}{4}H_0^{(1)}(\kappa_j|{\bf{x}}-{\bf{y}}|),\quad {\bf{x,y}}\in\Omega_j,\, {\bf{x}}\neq{\bf{y}}
\end{aligned} 
\end{equation*}
where $H_0^{(1)}$ is the Hankel function of the first kind and zero order. We introduce the single- and the double-layer potential for a continuous density $f$, given by
\begin{equation*}%\label{single_double}
\begin{aligned}
(\mathcal S_{klj} f) (\b x) &= \int_{\Gamma_j} \Phi_k (\b x,\b y) f(\b y) ds (\b y), & \b x \in\Omega_l , \\
(\mathcal D_{klj} f) (\b x) &= \int_{\Gamma_j} \frac{\partial \Phi_k}{\partial n (\b y)} (\b x,\b y) f(\b y) ds (\b y), & \b x \in\Omega_l ,
\end{aligned}
\end{equation*}
for $k,l=0,1,2$ and $j=0,1$. The single-layer potential $\mathcal S$ is continuous in $\R^2$ and its normal and tangential derivatives as $\b x \rightarrow \Gamma_j$ satisfy standard jump relations, see for instance \cite{ColKre83}. We define the operators
%\begin{equation}
\begin{align*}%\label{operators}
(S_{klj} f) (\b x) &= \int_{\Gamma_j} \Phi_k (\b x,\b y) f(\b y) ds (\b y), & \b x \in\Gamma_l ,\\
( D_{klj} f) (\b x) &= \int_{\Gamma_j} \frac{\partial \Phi_k}{\partial n (\b y)} (\b x,\b y) f(\b y) ds (\b y), & \b x \in\Gamma_l , \\
( NS_{klj} f) (\b x) &= \int_{\Gamma_j} \frac{\partial \Phi_k}{\partial n (\b x)} (\b x,\b y) f(\b y) ds (\b y), & \b x \in\Gamma_l , \\
( ND_{klj} f) (\b x) &= \int_{\Gamma_j} \frac{\partial^2 \Phi_k}{\partial n (\b x)\partial n (\b y)} (\b x,\b y) f(\b y) ds (\b y), & \b x \in\Gamma_l , \\
( TS_{klj} f) (\b x) &= \int_{\Gamma_j} \frac{\partial \Phi_k}{\partial \tau (\b x)} (\b x,\b y) f(\b y) ds (\b y), & \b x \in\Gamma_l , \\
( TD_{klj} f) (\b x) &= \int_{\Gamma_j} \frac{\partial^2 \Phi_k}{\partial \tau (\b x)\partial n (\b y)} (\b x,\b y) f(\b y) ds (\b y), & \b x \in\Gamma_l .
\end{align*}
%\end{equation}

Note that if we were using only the direct method, meaning Green's second identity, the fields would have the representations
\begin{equation*}
\begin{aligned}
u^0(\b x) &=(\mathcal D _{000}u^0)(\b x)-(\mathcal S _{000}\partial_n{u^0})(\b x), & \b x \in\Omega_0, \\
u^1(\b x) &=(\mathcal S _{110}\partial_n{u^1})(\b x)-(\mathcal D _{110}u^1)(\b x)+(\mathcal S_{111}\partial_n{u^1})(\b x)+(\mathcal D _{111}u^1)(\b x), & \b x \in\Omega_1,\\
u^2(\b x) &=(\mathcal S _{221}\partial_n{u^2})(\b x)-(\mathcal D _{221}u^2)( \b x), & \b x\in\Omega_2,
\end{aligned}
\end{equation*}
for $u=e,h$. We observe that we have 16 unknown density functions and only 8 equations, meaning the transmission
 boundary conditions \eqref{boundary_ext}. Thus, we consider a combination of the direct and the indirect methods, in order to have enough information to solve the derived system of boundary integral equations.

\begin{theorem}\label{theorem2}
If $\mu_1 \neq \mu_2$ and $\kappa_1^2$ is not an interior Dirichlet eigenvalue for $\Omega_{1,2}$, $\kappa_2^2$ is not an interior Dirichlet eigenvalue for $\Omega_2$ and $\kappa_0^2$ is not a Dirichlet eigenvalue in $\mathbb{R}^2\setminus\overline{\Omega_0}$, then the problem  \eqref{helm} -- \eqref{radiation} has a unique solution.
\end{theorem}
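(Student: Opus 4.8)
The plan is to prove existence by reformulating the transmission problem \eqref{helm}--\eqref{radiation} as an equivalent system of boundary integral equations and invoking the Riesz--Fredholm theory; uniqueness is already supplied by \autoref{theorem1} under the hypotheses $\mu_1\neq\mu_2$ and $\kappa_1^2$ not an interior Dirichlet eigenvalue of $\Omega_{1,2}$. First I would represent the six fields by layer potentials with densities supported on $\Gamma_0$ and $\Gamma_1$: the exterior fields $e^0,h^0$ by potentials built from $\Phi_0$ on $\Gamma_0$, the fields $e^1,h^1$ in the doubly connected domain $\Omega_1$ by potentials built from $\Phi_1$ on \emph{both} $\Gamma_0$ and $\Gamma_1$, and $e^2,h^2$ by potentials built from $\Phi_2$ on $\Gamma_1$. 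Since each kernel carries the correct wavenumber, every field automatically solves the corresponding Helmholtz equation in its subdomain, and the $\Phi_0$-potentials automatically satisfy the Sommerfeld radiation condition \eqref{radiation}. The combination of single- and double-layer potentials (the indirect and direct ingredients) is chosen so that the number of free densities matches the eight transmission conditions \eqref{boundary_ext}, leaving a square system once these conditions are imposed.

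Next I would insert the ansatz into the eight conditions \eqref{boundary_ext}, using the standard jump relations for the single-layer potential and its normal derivative as $\b x\to\Gamma_j$, so that the system is expressed through the operators $S_{klj}, D_{klj}, NS_{klj}, TS_{klj}$ (and $ND_{klj}, TD_{klj}$ where double-layer terms enter). The structural point is that each of the eight conditions must acquire a nonzero $\pm\tfrac12$ jump term: the double-layer contributions supply the jump in the Dirichlet-type continuity rows ($e^1=e^0$, $h^1=h^0$, $e^1=e^2$, $h^1=h^2$), while the single-layer contributions supply the jump in the four mixed normal/tangential rows. If this is arranged, the system takes the form $(\mathcal A+\mathcal K)f=g$ where $\mathcal A$ is boundedly invertible (built from the jump terms) and $\mathcal K$ is compact, the compactness coming from the weakly singular operators $S_{klj}$, from all operators whose kernel lives on one curve but is evaluated on the disjoint other curve (smooth kernels, since $\Gamma_0\cap\Gamma_1=\emptyset$ and both are $C^2$), and from the lower-order parts of $NS_{klj}$ and $TS_{klj}$. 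Then $\mathcal A^{-1}(\mathcal A+\mathcal K)=\id+\mathcal A^{-1}\mathcal K$ is a compact perturbation of the identity and the Riesz--Fredholm theory reduces existence to injectivity.

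It then remains to prove injectivity, after which existence for the incident data \eqref{incident} follows from the Fredholm alternative. Setting $g=0$, the fields generated by the densities solve the homogeneous version of \eqref{helm}--\eqref{radiation}, so by \autoref{theorem1} they vanish identically in their subdomains. I would propagate this back to the densities by evaluating each representation from the complementary side of its curve: using the continuity of the single-layer potential, each surviving density generates a field with vanishing Dirichlet trace on an interior domain, which must be trivial precisely because $\kappa_2^2$ is not an interior Dirichlet eigenvalue of $\Omega_2$ and $\kappa_0^2$ is not a Dirichlet eigenvalue in $\R^2\setminus\overline{\Omega_0}$; the normal-derivative jump then forces each density to be zero, giving injectivity. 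The hard part will be the two operator-theoretic steps hidden in the second paragraph: arranging the single/double-layer combination so that the principal operator $\mathcal A$ is genuinely invertible despite the coupling between the densities on $\Gamma_0$ and on $\Gamma_1$, and controlling the strongly singular operators produced by the normal and tangential derivatives in the mixed conditions, since the tangential-derivative operators $TS_{klj}$ are only of order zero and one must verify that, after isolating the jump part, the entire remainder is compact on the chosen trace spaces.
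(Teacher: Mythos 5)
Your plan follows the paper's proof in every structural respect: the same mixed representation (Green's formula with kernel $\Phi_0$ in $\Omega_0$ and $\Phi_2$ in $\Omega_2$, a pure single-layer ansatz with kernel $\Phi_1$ on both curves for $\Omega_1$), the same reduction via the jump relations to a system $(\b B+\b C)\bm\phi=\b f$ with invertible principal part and compact remainder, Riesz--Fredholm, and injectivity deduced from \autoref{theorem1}. The two difficulties you defer at the end are resolved in the paper along the lines you anticipate: the density count is made square not by pruning potentials but by imposing the four proportionality relations \eqref{identity} among the normal-derivative densities, after which the jump terms assemble into a block lower-triangular operator $\b B$ with nonzero constant diagonal entries and $\partial_\tau$ entries below the diagonal, invertible because $\partial_\tau:H^{1/2}(\Gamma_j)\to H^{-1/2}(\Gamma_j)$ is bounded; the non-compact pieces you worry about (the hypersingular $ND$ terms and the zeroth-order $TS$ terms) are declared compact by measuring the output one Sobolev order lower, i.e.\ viewing $\b K=\b B^{-1}\b C$ as a map from $H_1$ into the weaker space $H_2$.

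The genuine gap is in your injectivity step. You assert that each surviving density produces a field with vanishing Dirichlet trace on an interior domain, trivial \emph{precisely because} $\kappa_2^2$ is not a Dirichlet eigenvalue of $\Omega_2$ and $\kappa_0^2$ is not one of $\R^2\setminus\overline{\Omega_0}$. This does not cover the four densities of the middle layer, whose potentials carry the wavenumber $\kappa_1$. For the $\Phi_1$-densities on $\Gamma_0$ (the paper's $\psi_1^{e,h}$), the complementary side of the curve is the unbounded domain $\Omega_0$, so no interior eigenvalue condition is available; one must instead use that the potential is a radiating $\kappa_1$-solution with zero trace on $\Gamma_0$, invoke uniqueness of the exterior Dirichlet problem, and only then apply the normal-derivative jump (this is the role of the paper's auxiliary fields $\tilde e,\tilde h$). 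For the $\Phi_1$-densities on $\Gamma_1$ (the paper's $\psi_2^{e,h}$), the complementary domain is the region enclosed by $\Gamma_1$, but the equation satisfied there is $\Delta v+\kappa_1^2 v=0$, not $\Delta v+\kappa_2^2 v=0$; your hypothesis on $\kappa_2^2$ therefore gives nothing at this step, and what is actually needed is that $\kappa_1^2$ is not a Dirichlet eigenvalue of that enclosed region --- the paper invokes exactly such a $\kappa_1$-condition, through the injectivity of $NS_{111}-\tfrac12$. Finally, for the double-layer densities $\phi_0,\phi_3$ the normal derivative of the potential is continuous across the curve, so a ``normal-derivative jump'' cannot be what kills them: there it is the jump of the trace itself, combined with uniqueness for the exterior Neumann problem, that yields $\phi_0=\phi_3=0$.
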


\begin{proof}
We combine Green's second formula for the fields  in the domains $\Omega_0$ and $\Omega_2,$ and a single layer ansatz for the interior fields in  $\Omega_1.$ We consider the forms
\begin{equation}\label{solutions}
\begin{aligned}
e^0( \b x ) &=\mathcal D_{000}\phi_0^e({\bf{x}})-\mathcal S_{000}\psi_0^e({\bf{x}}), &  {\bf{x}}\in\Omega_0,\\
h^0({\bf{x}}) &=\mathcal D_{000}\phi_0^h({\bf{x}})-\mathcal S_{000}\psi_0^h({\bf{x}}), & {\bf{x}}\in\Omega_0,\\
e^1({\bf{x}}) &=\mathcal S _{110}\psi_1^e({\bf{x}})+\mathcal S _{111}\psi_2^e({\bf{x}}), & {\bf{x}}\in\Omega_1,\\
h^1({\bf{x}}) &=\mathcal S_{110}\psi_1^h({\bf{x}})+\mathcal S_{111}\psi_2^h({\bf{x}}), & {\bf{x}}\in\Omega_1,\\
e^2({\bf{x}}) &=\mathcal S_{221}\psi_3^e({\bf{x}})-\mathcal D_{221}\phi_3^e({\bf{x}}), & {\bf{x}}\in\Omega_2,\\
h^2({\bf{x}})& =\mathcal S_{221}\psi_3^h({\bf{x}})-\mathcal D_{221}\phi_3^h({\bf{x}}), & {\bf{x}}\in\Omega_2,
\end{aligned}
\end{equation}
with $\psi_j^u:=\partial_nu^j|_{\Gamma_k} ,$ and $\phi_j^u:= u^j |_{\Gamma_k}, \, j = 0,1,2,3$, for $k=0,1,$ and  $u=e,h.$
Using the jump-relations, we see that the fields solve the direct problem if the densities satisfy the system of integral equations 
%\begin{equation*}%\label{integral}
\begin{align*}
S_{100}\psi_1^e+S_{101}\psi_2^e-\left(D_{000}+\frac{1}{2}\right)\phi_0^e+S_{000}\psi_0^e &=e^{inc}
\\
\tilde{\mu}_1\omega\left(NS_{100}+\frac{1}{2}\right)\psi_1^h+\tilde{\mu}_1\omega NS_{101}\psi_2^h+{\beta_1}TS_{100}\psi_1^e+\beta_1 TS_{101}\psi_2^e-\tilde{\mu}_0\omega ND_{000}\phi_0^h\\
+\tilde{\mu}_0\omega\left(NS_{000}-\frac{1}{2}\right)\psi_0^h-\beta_0\left(TD_{000}+\frac{\partial_{\tau}}{2}\right)\phi_0^e+\beta_0 TS_{000}\psi_0^e &=\beta_0\partial_{\tau}e^{inc}
\\
S_{100}\psi_1^h+S_{101}\psi_2^h-\left(D_{000}+\frac{1}{2}\right)\phi_0^h+S_{000}\psi_0^h &=0
\\
\tilde{\epsilon}_1\omega\left(NS_{100}+\frac{1}{2}\right)\psi_1^e+\tilde{\epsilon}_1\omega NS_{101}\psi_2^e-{\beta_1}TS_{100}\psi_1^h-\beta_1 TS_{101}\psi_2^h-\tilde{\epsilon}_0\omega ND_{000}\phi_0^e\\
+\tilde{\epsilon}_0\omega\left(NS_{000}-\frac{1}{2}\right)\psi_0^e+\beta_0\left(TD_{000}+\frac{\partial_{\tau}}{2}\right)\phi_0^h-\beta_0 TS_{000}\psi_0^h &=\tilde{\epsilon}_0\omega\partial_{n}e^{inc}
\\
S_{110}\psi_1^e+S_{111}\psi_2^e-S_{211}\psi_3^e+\left(D_{211}-\dfrac{1}{2}\right)\phi_3^e &=0
\\
\tilde{\mu}_1\omega NS_{110}\psi_1^h+\tilde{\mu}_1\omega\left(NS_{111}-\frac{1}{2}\right)\psi_2^h+\beta_1 TS_{110}\psi_1^e+\beta_1 TS_{111}\psi_2^e\\
-\tilde{\mu}_2\omega\left(NS_{211}+\dfrac{1}{2}\right)\psi_3^h +\tilde{\mu}_2\omega ND_{211}\phi_3^h-\beta_2 TS_{211}\psi_3^e+\beta_2\left(TD_{211}-\frac{1}{2}\partial_{\tau}\right)\phi_3^e &=0
\\
S_{110}\psi_1^h+S_{111}\psi_2^h-S_{211}\psi_3^h+\left(D_{211}-\frac{1}{2}\right)\phi_3^h &=0
\\
\tilde{\epsilon}_1\omega NS_{110}\psi_1^e+\tilde{\epsilon}_1\omega\left(NS_{111}-\frac{1}{2}\right)\psi_2^e-\beta_1 TS_{110}\psi_1^h-\beta_1 TS_{111}\psi_2^h\\
-\tilde{\epsilon}_2\omega\left(NS_{211}+\frac{1}{2}\right)\psi_3^e+\tilde{\epsilon}_2\omega ND_{211}\phi_3^e+\beta_2 TS_{211}\psi_3^h-\beta_2\left(TD_{211}-\frac{1}{2}\partial_{\tau}\right)\phi_3^h &=0.
\end{align*}
%\end{equation*}
Still we have an underdetermined system of equations. Thus, we impose the relations
\begin{equation}\label{identity}
\tilde{\epsilon}_1\psi_1^e =-\tilde{\epsilon}_0\psi_0^e,\quad \tilde{\mu}_1\psi_1^h=-\tilde{\mu}_0\psi_0^h,\quad \tilde{\mu}_1\psi_2^h=\tilde{\mu}_2\psi_3^h,\quad \tilde{\epsilon}_1\psi_2^e=\tilde{\epsilon}_2\psi_3^e.
\end{equation} 
Then, the system takes the form
\begin{equation}\label{system1}
\left( \b B+ \b C\right)\bm \phi= \b f,
\end{equation}
with
$\bm\phi=\left(\phi_0^e,\psi_1^h,\phi_0^h,\psi_1^e,\phi_3^e,\psi_2^h,\phi_3^h,\psi_2^e\right)^{\top}\in\mathbb{C}^8,$  $\b  f=\left(e^{inc},\beta_0\partial_{\tau}e^{inc},0,\tilde{\epsilon}_0\omega\partial_{n}e^{inc},0,0,0,0\right)^{\top}\in\mathbb{C}^8,$
and
\begin{equation*}
\renewcommand*{\arraystretch}{1}
\b B = \begin{pmatrix}
  -\frac{1}{2} & 0 & 0 & 0 & 0 & 0 & 0 & 0 \\ \\
   -\frac{\beta_0}{2}\partial_{\tau} & \tilde{\mu}_1\omega & 0 & 0 & 0 & 0 & 0 & 0 \\ \\
   0 & 0 & -\frac{1}{2} & 0 & 0 & 0 & 0 & 0\\ \\
   0 & 0 &\frac{\beta_0}{2}\partial_{\tau} & \tilde{\epsilon}_1\omega & 0 & 0 & 0 & 0\\ \\
   0 & 0 & 0 & 0 & -\frac{1}{2} & 0 & 0 & 0\\ \\
   0 & 0 & 0 & 0 & -\frac{\beta_2}{2}\partial_{\tau} & -\tilde{\mu}_1\omega & 0 & 0\\ \\
   0 & 0 & 0 & 0 & 0 & 0 & -\frac{1}{2} & 0\\ \\
   0 & 0 & 0 & 0 & 0 & 0 &\frac{\beta_2}{2}\partial_{\tau} & -\tilde{\epsilon}_1\omega
\end{pmatrix}.
\end{equation*}
The matrix-valued operator $\b C = (C_{kj})_{1\leq k,j \leq 8}$ has entries
%\begin{equation*}
\begin{align*}
C_{11} &=-D_{000}, & C_{14} &=S_{100}-\dfrac{\tilde{\epsilon}_1}{\tilde{\epsilon}_0}S_{000}, & C_{18 } &=S_{101}, \\
C_{21} &=-\beta_0 TD_{000}, & C_{22} &=\tilde{\mu}_1\omega\left(NS_{100}-NS_{000}\right), &  C_{23} &=-\tilde{\mu}_0\omega ND_{000},\\
C_{24} &=\beta_1 TS_{100}-\frac{\tilde{\epsilon}_1}{\tilde{\epsilon}_0}\beta_0 TS_{000}, &  C_{26} &=\tilde{\mu}_1\omega NS_{101}, & C_{28} &=\beta_1 TS_{101}, \\
C_{32} &=S_{100}-\dfrac{\tilde{\mu}_1}{\tilde{\mu}_0}S_{000}, &  C_{33}&=-D_{000}, &  C_{36} &=S_{101}, \\
C_{41} &=-\tilde{\epsilon}_0\omega ND_{000}, & C_{42} &=-\beta_1 TS_{100}+\beta_0\frac{\tilde{\mu}_1}{\tilde{\mu}_0}TS_{000}, & C_{43} &=\beta_0TD_{000},\\
C_{44} &=\tilde{\epsilon}_1\omega(NS_{100}-NS_{000}), & C_{46}&=-\beta_1TS_{101}, & C_{48} &=\tilde{\epsilon}_1\omega NS_{101},\\
C_{54} &=S_{110}, & C_{55}&=D_{211}, & C_{58} &=S_{111}-\frac{\tilde{\epsilon}_1}{\tilde{\epsilon}_2}S_{211},\\
C_{62} &=\tilde{\mu}_1\omega NS_{110}, & C_{64} &=\beta_1 TS_{110}, & C_{65} &=\beta_2 TD_{211},\\
C_{66} &=\tilde{\mu}_1\omega(NS_{111}-NS_{211}), & C_{67}&=\tilde{\mu}_2\omega ND_{211}, & C_{68} &=\beta_1 TS_{111}-\beta_2\frac{\tilde{\epsilon}_1}{\tilde{\epsilon}_2}TS_{211},\\
C_{72} &=S_{110}, & C_{76}&=S_{111}-\frac{\tilde{\mu}_1}{\tilde{\mu}_2}S_{211}, & C_{77} &=D_{211},\\
C_{82} &=-\beta_1 TS_{110}, & C_{84}&=\tilde{\epsilon}_1\omega NS_{110}, & C_{85}&=\tilde{\epsilon}_2\omega ND_{211},\\
C_{86} &=-\beta_1 TS_{111}+\beta_2\frac{\tilde{\mu}_1}{\tilde{\mu}_2}TS_{211}, & C_{87}&=-\beta_2TD_{211}, & C_{88} &=\tilde{\epsilon}_1\omega(NS_{111}-NS_{211}),
\end{align*}
%\end{equation*}
and the rest are zero. 

The special form of $\b B$ and the boundness of the  tangential operator $\partial_{\tau}:H^{1/2}\left(\Gamma_j\right)\rightarrow H^{-1/2}\left(\Gamma_j\right)$, for $j=0,1$, results to a bounded inverse matrix $\b B^{-1}.$ Then, the system \eqref{system1}  can be written in the form
\begin{equation}\label{eq_system}
(\b I_8 + \b K) \bm\phi= \b g,
\end{equation}
where $\b I_8$ is the $8\times 8$ identity operator, $ \b K= \b B^{-1} \b C$ and 
\begin{equation*}
\b g= \b B^{-1} \b f =  \left(-2e^{inc},0,0,\dfrac{\tilde{\epsilon}_0}{\tilde{\epsilon}_1}\partial_ne^{inc},0,0,0,0\right)^{\top}.
\end{equation*}

We define the product spaces
\begin{equation*}
\begin{aligned}
H_1 &:=\left(H^{1/2}(\Gamma_0) \times H^{-1/2}(\Gamma_0)\right)^2\times\left(H^{1/2}(\Gamma_1)\times H^{-1/2}(\Gamma_1)\right)^2 ,\\
H_2 &:=\left(H^{-1/2}(\Gamma_0) \times H^{-3/2}(\Gamma_0)\right)^2\times\left(H^{-1/2}(\Gamma_1)\times H^{-3/2}(\Gamma_1)\right)^2,
\end{aligned}
\end{equation*}
and using the mapping properties of the integral operators \cite{ColKre13, Kre14} we see that the operator $\b K:H_1\rightarrow H_2$ is compact. Now we show that this operator is also injective. 

Let $\bm\phi$ solve $(\b I_8 +\b K)\bm\phi=0,$ i.e. the direct problem for $ e^{inc}=\partial_n e^{inc}=\partial_{\tau}e^{inc}=0,$ on $\Gamma_0$. From \autoref{theorem1}, we have that $e^j=h^j=0,$ in $\Omega_j,$ for  $ j= 0,1,2.$

We construct the fields
\begin{equation*}
\tilde{e}({\b x})=\mathcal{S}_{100}\psi_1^e({\b x})+\mathcal{S}_{101}\psi_2^e({\b x}),\quad \tilde{h}({\b x})=\mathcal{S}_{100}\psi_1^h({\b x})+\mathcal{S}_{101}\psi_2^h({\b x}),\quad \b x\in\Omega_0, 
\end{equation*}
which are radiating solutions of the Helmholtz equation in $\Omega_0.$ Thus,  $\tilde{e} =\tilde{h} =0,$ in $\Omega_0,$ and consequently on $\Gamma_0.$ The continuity of the single layer potential gives $ e_1 = \tilde{e} = 0,$ and $h_1 = \tilde{h} = 0,$ on $\Gamma_0.$ On the other hand, the jump-relation of its normal derivative across $\Gamma_0$  results to 
\begin{equation*}
0=NS_{100}\psi_1^u -\frac{1}{2}\psi_1^u +NS_{101}\psi_2^u =NS_{100}\psi_1^u +\frac{1}{2}\psi_1^u +NS_{101}\psi_2^u,  \quad \mbox{for } u= e,h.
\end{equation*}
Then $\psi_1^e = \psi_1^h = 0,$ on $\Gamma_0.$ The relation \eqref{identity}, gives also $\psi_0^e = \psi_0^h = 0.$ 

From the representation \eqref{solutions} and the jump-relations across the boundary $\Gamma_1$ we obtain
\begin{equation*}
\bigg(NS_{111}-\frac{1}{2}\bigg)\psi_2^e =0,\quad\bigg(NS_{111}-\frac{1}{2}\bigg)\psi_2^h =0, \quad \mbox{on   } \Gamma_1.
\end{equation*} 
Since $\kappa_1^2$ is not an interior Dirichlet eigenvalue in $\Omega_1$, the unique solvability of the above integral equations gives   $\psi_2^e =\psi_2^h=0$ on $\Gamma_1$. Again, using \eqref{identity}, we also get $\psi_3^e =\psi_3^h=0,$ on $\Gamma_1$.

As the fields $e^0, \, h^0$ tend to $\Gamma_0,$ using \eqref{homogenous1} and \eqref{homogenous3} we get
\begin{equation*}
\bigg(D_{000}-\frac{1}{2}\bigg)\phi_0^e =0,\quad \bigg(D_{000}-\frac{1}{2}\bigg)\phi_0^h =0,\quad \mbox{on   } \Gamma_0.
\end{equation*}
The injectivity here follows from the assumption that $\kappa_0^2$ is not an interior Dirichlet eigenvalue in $\mathbb{R}^2\setminus\overline{\Omega_0}.$ Then $\phi_0^e =\phi_0^h=0,$ on $\Gamma_0$.

The same procedure for the fields $e^2, \,h^2,$ reduce the boundary conditions \eqref{homogenous5} and \eqref{homogenous7} to
\begin{equation*}
\bigg(D_{211}-\frac{1}{2}\bigg)\phi_3^e =0,\quad \bigg(D_{211}-\frac{1}{2}\bigg)\phi_3^h =0,\quad \mbox{on  } \Gamma_1.
\end{equation*}
The assumption on  $\kappa_2^2$ leads to the trivial solution $\phi_3^e =\phi_3^h =0,$ on $\Gamma_1$. This completes the proof, since $\bm\phi=0$.
\end{proof}

\section{Numerical implementation}\label{sec_numerics}

We solve numerically the direct problem  \eqref{helm} -- \eqref{radiation} considering the solution of the linear system \eqref{system1} or \eqref{eq_system}. We handle the singularities of the kernels of the integral operators using quadrature rules and we approximate the smooth kernels with the trapezoidal rule \cite{Kre14}. We do not present here the forms and the decompositions of the kernels since they can be found in previous works, see for instance \cite{ColKre13, Kre14}. We address the Maue's formulas in order to reduce the hyper-singularity of the normal and tangential derivative of the double layer potential \cite{GinMin16}. We obtain a linear system by collocating the system of integral equations at the nodal points using trigonometric polynomial approximations \cite{Kre14}.  Standard convergence and error analysis applies in this case \cite{Kre95}.

We present results for two examples. In the first one, we consider four arbitrary point sources and we construct boundary data such that we have analytic fields as solutions. We compare them with the numerical solution. The expected exponential convergence is clearly achieved \cite{Kre90}. The second example deals with the initial scattering problem by an obliquely incident wave. The correctness of the derived solution cannot be checked for this case but the first example justifies the accuracy of the proposed numerical scheme. 

We assume the following parametric representation for the smooth boundary curves 
\begin{equation*}
\begin{aligned}
\Gamma_0 &:= \left\lbrace \b x^0(t)=(x_1^0(t),\, x_2^0(t)), \,\,  t\in[0,2\pi]\right\rbrace, \\
\Gamma_1 &:= \left\lbrace \b x^1(t)=(x_1^1(t),\, x_2^1(t)), \,\,  t\in[0,2\pi]\right\rbrace,
\end{aligned}
\end{equation*}
where $\b x^0, \, \b x^1:\mathbb{R}\rightarrow\mathbb{R}^2$ are $C^2$-smooth, $2\pi$-periodic, injective and counter-clockwise oriented parametrizations. We consider $2n$ equidistant collocation points
\begin{equation*}
t_j =\frac{j\pi}{n},\quad j=0,...,2n-1 .
\end{equation*}

\textbf{Example 1 (analytic solution)}  We consider four arbitrary points $\b z _1, \, \b z _2\in\Omega,$ and $\b z _3, \, \b z _4\in\Omega_0$ and  we define the boundary functions $f^k_j (\b x), \,k=0,1, \,  j=1,2,3,4,$ by

%\begin{equation*}
\begin{align*}
f^0_1( \b x ) &=H_0^{(1)}(\kappa_1| \b r _3( \b x )|)-H_0^{(1)}(\kappa_0| \b r _1( \b x )|), \\
f^0_2 ( \b x ) &=-\kappa_1\tilde{\mu}_1 \omega\dfrac{H_1^{(1)}(\kappa_1 | \b r _4( \b x )|) \, \b n ( \b x )\cdot \b r _4 ( \b x )}{| \b r _4( \b x )|} -\kappa_1 \beta_1\dfrac{H_1^{(1)}(\kappa_1 | \b r _3( \b x )|) \,{\bm{\tau}}( \b x )\cdot \b r _3 ( \b x )}{| \b r _3( \b x )|} \\
&\phantom{=}+\kappa_0\tilde{\mu_0}\omega \dfrac{H_1^{(1)}(\kappa_0 | \b r _2( \b x )|) \, \b n ( \b x )\cdot \b r _2 ( \b x )}{| \b r _2( \b x )|}+\kappa_0\beta_0 \dfrac{H_1^{(1)}(\kappa_0 | \b r _1( \b x )|) \,{\bm{\tau}}( \b x )\cdot \b r _1 ( \b x )}{| \b r _1( \b x )|}, \\
f^0_3 ( \b x ) &=H_0^{(1)}(\kappa_1| \b r _4( \b x )|)-H_0^{(1)}(\kappa_0| \b r _2( \b x )|), \\
f^0_4 (\b x ) &=-\kappa_1\tilde{\epsilon_1}\omega\dfrac{H_1^{(1)}(\kappa_1 | \b r _3( \b x )|) \,\b n ( \b x )\cdot \b r _3 ( \b x )}{| \b r _3( \b x )|}+\kappa_1 \beta_1\dfrac{H_1^{(1)}(\kappa_1 |\b r _4( \b x )|) \, {\bm{\tau}}( \b x )\cdot \b r _4 (\b x )}{| \b r _4(\b x )|} \\
&\phantom{=}+\kappa_0\tilde{\epsilon_0}\omega \dfrac{H_1^{(1)}(\kappa_0 | \b r _1( \b x )|) \, \b n ( \b x )\cdot \b r _1 ( \b x )}{| \b r _1( \b x )|)} -\kappa_0\beta_0 \dfrac{H_1^{(1)}(\kappa_0 | \b r _2( \b x )|) \, {\bm{\tau}}( \b x )\cdot \b r _2 ( \b x )}{| \b r _2( \b x )|}, \\
f^1_1( \b x ) &=H_0^{(1)}(\kappa_1| \b r _3( \b x )|)-H_0^{(1)}(\kappa_2| \b r _3( \b x )|), \\
f^1_2( \b x ) &=-\kappa_1\tilde{\mu_1}\omega\dfrac{H_1^{(1)}(\kappa_1 | \b r _4( \b x )|) \, \b n ( \b x)\cdot \b r _4 ( \b x )}{| \b r _4( \b x )|}-\kappa_1 \beta_1\dfrac{H_1^{(1)}(\kappa_1 | \b r _3( \b x )|) \, {\bm{\tau}}( \b x )\cdot \b r _3 ( \b x )}{| \b r _3( \b x )|} \\
&\phantom{=}-\kappa_2\tilde{\mu_2}\omega \dfrac{H_1^{(1)}(\kappa_2 | \b r _4( \b x )|) \, \b n ( \b x )\cdot \b r _4 ( \b x )}{| \b r _4( \b x )|} -\kappa_2\beta_2 \dfrac{H_1^{(1)}(\kappa_2 | \b r _3( \b x )|) \,{\bm{\tau}}( \b x )\cdot \b r _3 ( \b x )}{| \b r _3( \b x )|}, \\
f^1_3(\b x ) &=H_0^{(1)}(\kappa_1 | \b r _4( \b x )|)-H_0^{(1)}(\kappa_2| \b r _4( \b x )|), \\ 
f^1_4 ( \b x ) &=-\kappa_1\tilde{\epsilon_1}\omega\dfrac{H_1^{(1)}(\kappa_1 | \b r _3( \b x )|) \, \b n ( \b x )\cdot \b r _3 ( \b x )}{| \b r _3( \b x )|} +\kappa_1 \beta_1\dfrac{H_1^{(1)}(\kappa_1 | \b r _4( \b x )|)\,{\bm{\tau}}( \b x )\cdot \b r _4 ( \b x )}{| \b r _4( \b x )|} \\
&\phantom{=}-\kappa_2\tilde{\epsilon_2}\omega \dfrac{H_1^{(1)}(\kappa_2 | \b r _3( \b x )|) \, \b n ( \b x )\cdot \b r _3 (\b x)}{|\b r _3( \b x )|} +\kappa_2\beta_2 \dfrac{H_1^{(1)}(\kappa_2 | \b r _4( \b x )|) \, {\bm{\tau}}( \b x )\cdot \b r _4 ( \b x )}{| \b r _4( \b x )|},
\end{align*}
%\end{equation*}
where $ \b r _j( \b x )= \b x - \b z _j, \, j = 1,2,3,4.$

Then, the fields 
\begin{equation}\label{exact_sol}
\begin{aligned}
e^{0}( \b x ) &=H_0^{(1)}(\kappa_0 | \b r _1( \b x )|), & h^{0}( \b x ) &=H_0^{(1)}(\kappa_0 | \b r_2( \b x )|), & \b x &\in\Omega_0, \\
e^{1}( \b x ) &=H_0^{(1)}(\kappa_1 | \b r _3( \b x )|), & h^1 ( \b x ) &=H_0^{(1)}(\kappa_1 | \b r_4 ( \b x )|), & \b x &\in\Omega_1, \\
e^{2}( \b x ) &=H_0^{(1)}(\kappa_2 | \b r _3( \b x )|), & h^2 ( \b x ) &=H_0^{(1)}(\kappa_2 | \b r_4 ( \b x )|), & \b x &\in\Omega_2, 
\end{aligned}
\end{equation}
satisfy the system of Helmholtz equations
\[
\Delta e^j + \kappa_j^2 e^j =0, \quad \Delta h^j + \kappa_j^2 h^j =0, \quad j = 0,1,2,
\]
and the transmission boundary conditions
\begin{equation*}
\begin{aligned}
f^0_1 &= e^1 - e^0, & \mbox{on  } \Gamma_0, \\
f^0_2 &= \tilde{\mu}_1\omega\dfrac{\partial h^1}{\partial n}+{\beta_1}\dfrac{\partial e^1}{\partial\tau}-\tilde{\mu}_0\omega\dfrac{\partial h^{0}}{\partial n}-\beta_0\dfrac{\partial e^{0}}{\partial\tau}, & \mbox{on  } \Gamma_0, \\
f^0_3 &= h^1 - h^0, & \mbox{on  } \Gamma_0, \\
f^0_4 &= \tilde{\epsilon}_1\omega\dfrac{\partial e^1}{\partial n}-{\beta_1}\dfrac{\partial h^1}{\partial\tau}-\tilde{\epsilon}_0\omega\dfrac{\partial e^{0}}{\partial n}+{\beta_0}\dfrac{\partial h^{0}}{\partial\tau},  & \mbox{on  } \Gamma_0, \\
f^1_1 &=e^1 - e^2, & \mbox{on  } \Gamma_1, \\
f^1_2 &=\tilde{\mu}_1\omega\dfrac{\partial h^1}{\partial n}+{\beta_1}\dfrac{\partial e^1}{\partial\tau}-\tilde{\mu}_2\omega\dfrac{\partial h^2 }{\partial n}-\beta_2 \dfrac{\partial e^2 }{\partial\tau}, & \mbox{on  } \Gamma_1, \\
f^1_3 &=h^1-h^2, & \mbox{on  } \Gamma_1, \\
f^1_4 &=\tilde{\epsilon}_1\omega\dfrac{\partial e^1}{\partial n}-{\beta_1}\dfrac{\partial h^1}{\partial\tau}-\tilde{\epsilon}_2\omega\dfrac{\partial e^2 }{\partial n}+{\beta_2}\dfrac{\partial h^2 }{\partial\tau},  & \mbox{on  } \Gamma_1.\\
\end{aligned}
\end{equation*}
The exterior fields $e^0, \, h^0$ satisfy in addition the radiation condition \eqref{radiation}. 

\begin{figure}[t]
\begin{center}
\includegraphics[scale=0.9]{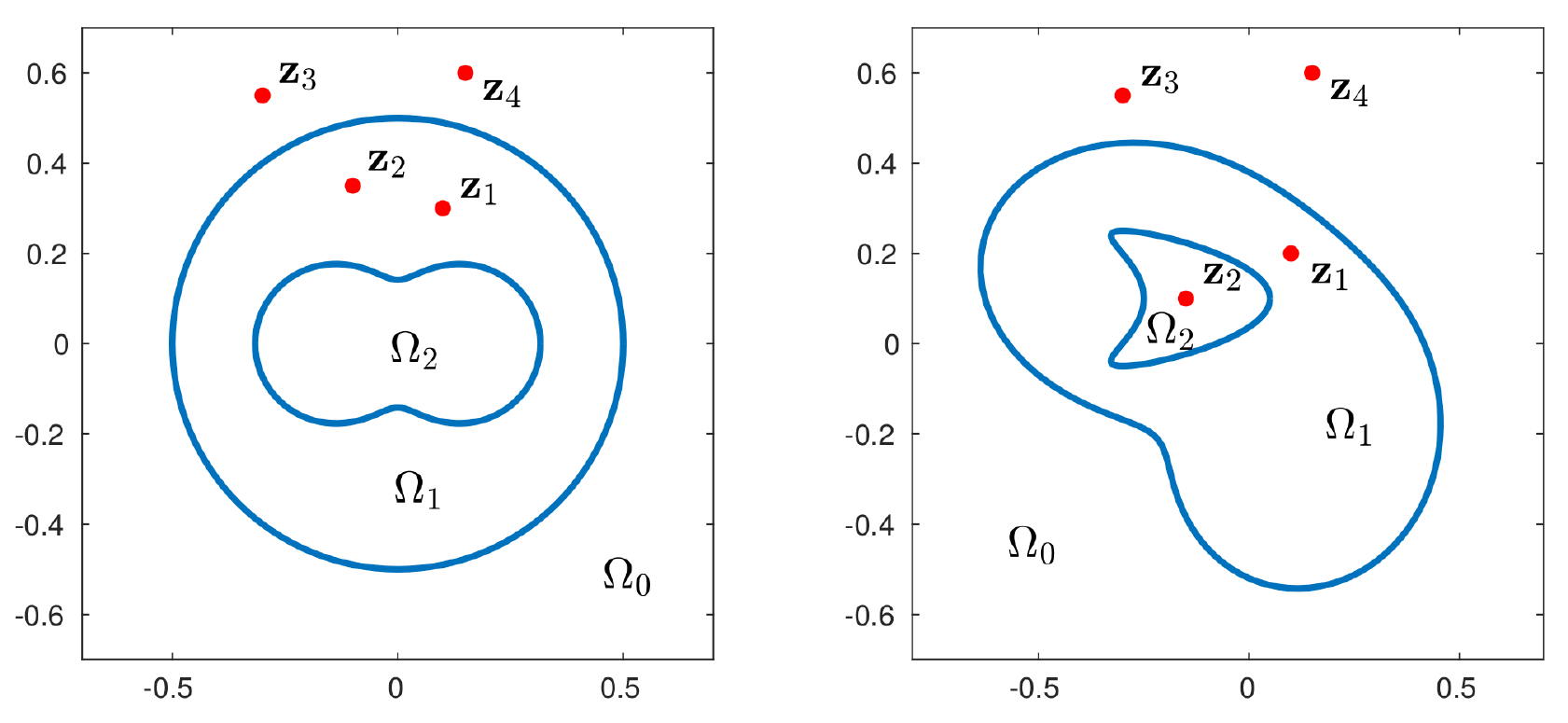}
\caption{ The geometry of the problem and the position of the point sources considered in the first (left) and in the second (right) case of the first example.  }\label{fig1}
\end{center}
\end{figure}

We compare the numerical solutions $u^j_n,$ for $u= e,h, \, j= 0,1,2,$ with 
the exact solutions \eqref{exact_sol}, with respect to the discretization parameter $n.$ Using the asymptotic behavior of the Hankel function, we can correlate also the exact far-field of the scattered wave, given by
\begin{equation*}
e^{\infty}({\bf{\hat{x}}})=\dfrac{-4ie^{i\pi/4}}{\sqrt{8\pi\kappa_0}}e^{-i\kappa_0\bf{\hat{x}}\cdot \b{z}_1},\quad h^{\infty}({\bf{\hat{x}}})=\dfrac{-4ie^{i\pi/4}}{\sqrt{8\pi\kappa_0}}e^{-i\kappa_0\bf{\hat{x}}\cdot {\bf{z}_2}},\quad{\bf{\hat{x}}}\in S,
\end{equation*}
with the numerical one which takes the form
\begin{equation*}
\begin{aligned}
e_n^{\infty}({\bf{\hat{x}}}(t)) &=\dfrac{e^{i\pi/4}}{\sqrt{8\pi\kappa_0}}\int_0^{2\pi}e^{-i\kappa_0{\bf{\hat{x}}}\cdot{\b {x}}^0(t)}\left[-i\kappa_0({\bf{\hat{x}}}\cdot{\bf{n}}({\bf{x}}^0(t)))\phi_0^e(t)-\psi_0^e(t)\right]|{\bf{x}}^{0'}(t)|dt,
\\
h_n^{\infty}({\bf{\hat{x}}}(t)) &=\dfrac{e^{i\pi/4}}{\sqrt{8\pi\kappa_0}}\int_0^{2\pi}e^{-i\kappa_0{\bf{\hat{x}}}\cdot{\bf{x}}^0(t)}\left[-i\kappa_0({\bf{\hat{x}}}\cdot{\bf{n}}({\bf{x}}^0(t)))\phi_0^h(t)-\psi_0^h(t)\right]|{\bf{x}}^{0'}(t)|dt,
\end{aligned}
\end{equation*}
considering the representation \eqref{solutions}, where now the density functions solve \eqref{system1} with the right-hand side replaced by
$\b f = (f^0_1 ,\, ... \,, f^0_4, \, f^1_1 , \,... \,, f^1_4)^\top.$

{\renewcommand{\arraystretch}{1.4}
 \begin{table}[t]
\begin{center}
 \begin{tabular}{| c  | c  | c  | } 
 \hline
 $n$ & $ e_n^1 (0, \,- 0.3) $ & $ h_n^1 (0, \,-0.3) $  
\\ \hline 
8 & $ 0.358002472423  +i\,    0.465413341071  $ & $   0.340975378878  + i \,    0.470609286666     $\\
16 & $ 0.371293663181  +i\,       0.464362535355 $ & $    0.359126522592 +i\,       0.469698417374       $\\
32 & $ 0.371625795959   +i\,      0.464351276823  $ & $   0.359231933566  +i\,      0.469693507804       $\\
 64   & $    0.371625444291   +i\,      0.464351267787  $ & $   0.359232825162  +i\,      0.469693536105       $\\
 \hline
%\cline{1-1}\hhline{~==}
\multicolumn{1}{c|}{} &  $ e^1 (0, \, -0.3) $ & $ h^1 (0, \,-0.3) $   \\
\cline{2-3} 
\multicolumn{1}{c|}{} &  $    0.371625444291  +i\,       0.464351267787  $ & $    0.359232825161 +i\,       0.469693536105    $  \\ \cline{2-3}
\end{tabular}
\caption{The computed and the exact interior electric and magnetic fields in $\Omega_1.$  }\label{table1} 
\bigskip
 \begin{tabular}{| c  | c  | c  | } 
 \hline
 $n$ & $ e_n^2 (0.2, \, 0) $ & $ h_n^2 (0.2, \,0) $  
\\ \hline 
8 & $ 0.095999847542   +i\,      0.523746820044  $ & $   0.334044441714   +i\,     0.476243803875      $\\
16 & $ 0.110704266109    +i\,     0.520805574336  $ & $   0.350976468890   +i\,     0.473070152121     $\\
 32    & $   0.111073699187    +i\,     0.520785772802  $ & $   0.351044736683   +i\,     0.473067712107     $\\
 64     & $  0.111073338024   +i\,      0.520785762600  $ & $   0.351045738875   +i\,     0.473067595167          $\\
 \hline
%\cline{1-1}\hhline{~==}
\multicolumn{1}{c|}{} &  $ e^2 (0.2, \, 0) $ & $ h^2 (0.2, \,0) $   \\
\cline{2-3} 
\multicolumn{1}{c|}{} &  $    0.111073338024    +i\,     0.520785762600  $ & $   0.351045738874  +i\,      0.473067595167      $  \\ \cline{2-3}
\end{tabular}
\caption{The computed and the exact interior electric and magnetic fields in $\Omega_2.$ }\label{table2} 
\end{center}
\end{table}

We consider a peanut-shaped  interior boundary $\Gamma_1$ with parametric form
\[
\b x^1 (t) = \sqrt{0.1 \cos^2 t + 0.02 \sin^2 t}\, (\cos t, \, \sin t), \quad t \in [0,2\pi],
\]
and the $\Gamma_0$ is a circle with center $(0,\,0)$ and radius $0.5.$ The material parameters are $(\epsilon_0, \, \mu_0) = (1,\,1),$ $(\epsilon_1, \, \mu_1) = (2,\,2),$  and $(\epsilon_2, \, \mu_2) = (3,\,3).$ We set $\omega =1$ and $\theta = \pi/3.$ The source points are located at the positions $\b z_1 = (0.1, \, 0.3), \, \b z_2 = (-0.1,\, 0.35) \in \Omega_1,$ and $\b z_3 = (-0.3, \, 0.55), \, \b z_4 = (0.15,\, 0.6) \in \Omega_0,$ see the left picture in \autoref{fig1}.

In \autoref{table1} and \autoref{table2} we see the numerical and the exact values of the interior fields at the position $(0,\, -0.3) \in \Omega_1,$ and $(0.2,\, 0) \in \Omega_2,$ respectively, for increasing discretization number $n.$ The comparison between the numerical and the exact scattered fields at the near- and the far-field is presented in \autoref{table3} and \autoref{table4}, respectively.  We compute the near-field at the position $(0.2, \, 0.7) $ and the far-field at the direction $\b{\hat{x}}( 0).$ The exponential convergence is clearly exhibited, as we see also in \autoref{fig2} where we plot the $L^2$-norm (in semi-logarithmic scale) of the difference between the exact and the computed near- and far-fields, respectively.

The numerical results are independent of the parametrization of the boundary and of the material parameters. To support that, we consider also the following case: a kite-shaped  interior boundary $\Gamma_1$ with parametric form
\[
\b x^1 (t) =  (0.15\cos t +0.1\cos 2t - 0.2, \, 0.15\sin t + 0.15), \quad t \in [0,2\pi],
\]
and an apple-shaped boundary $\Gamma_0$ having the parametrization
\[
\b x^0 (t) = \frac{0.45 + 0.3 \cos t -0.1 \sin 2t}{1+0.7 \cos t} \, (\cos t, \, \sin t), \quad t \in [0,2\pi].
\]
We use $\omega=2$ and $\theta = \pi/6,$ with material parameters $(\epsilon_0, \, \mu_0) = (1,\,1),$ $(\epsilon_1, \, \mu_1) = (3,\,2),$  and $(\epsilon_2, \, \mu_2) = (4,\,3).$ The locations of the source points are given in the right picture of \autoref{fig1}. The expected convergence is obtained also for this case, as \autoref{fig3} demonstrates.

\begin{figure}[t]
\begin{center}
\includegraphics[scale=0.9]{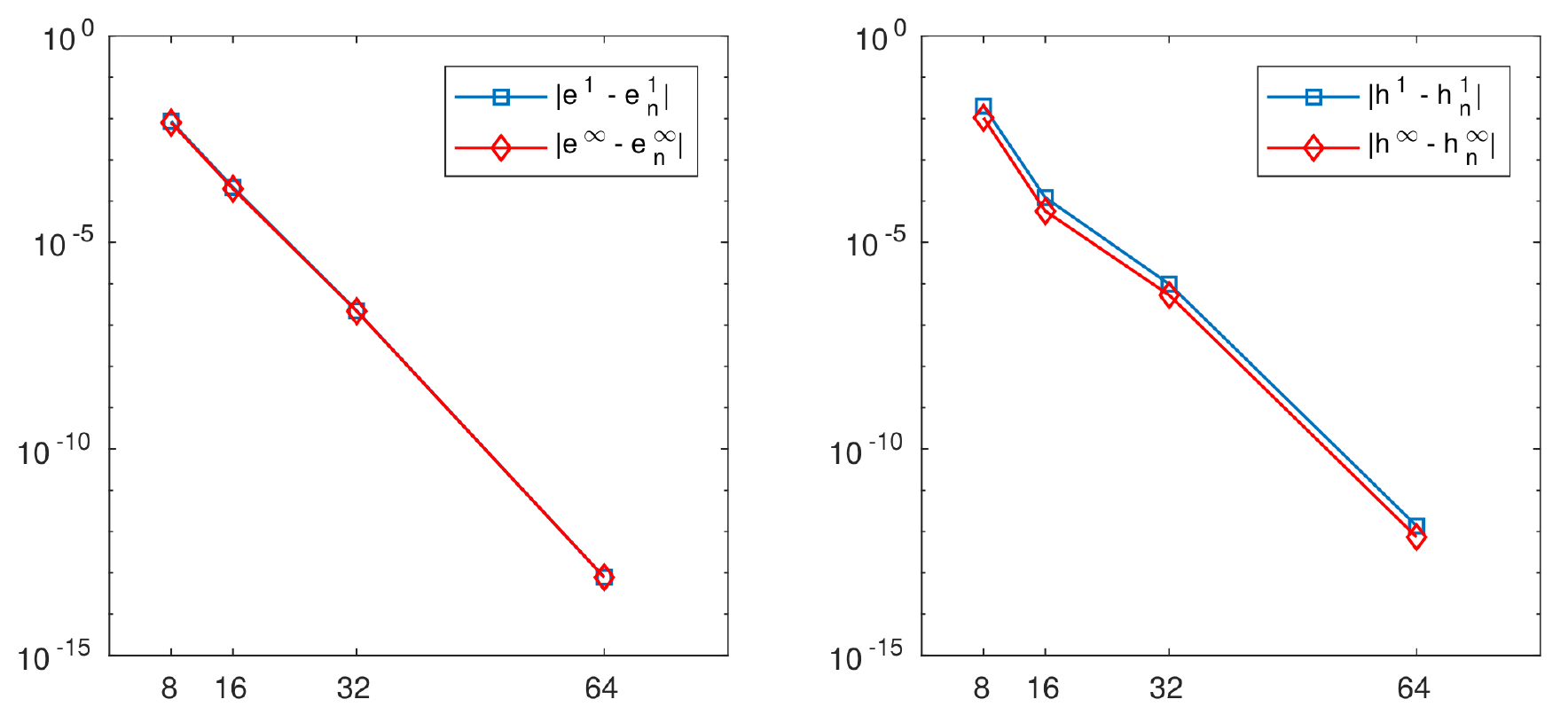}
\caption{The $L^2-$norm (in semi-logarithmic scale) of the difference between the computed and the exact interior (blue line) and the far-field (red line) of the electric (left) and the magnetic (right) fields. The plots are with respect to $n,$ for the first case of the first example.  }\label{fig2}
\bigskip
\includegraphics[scale=0.9]{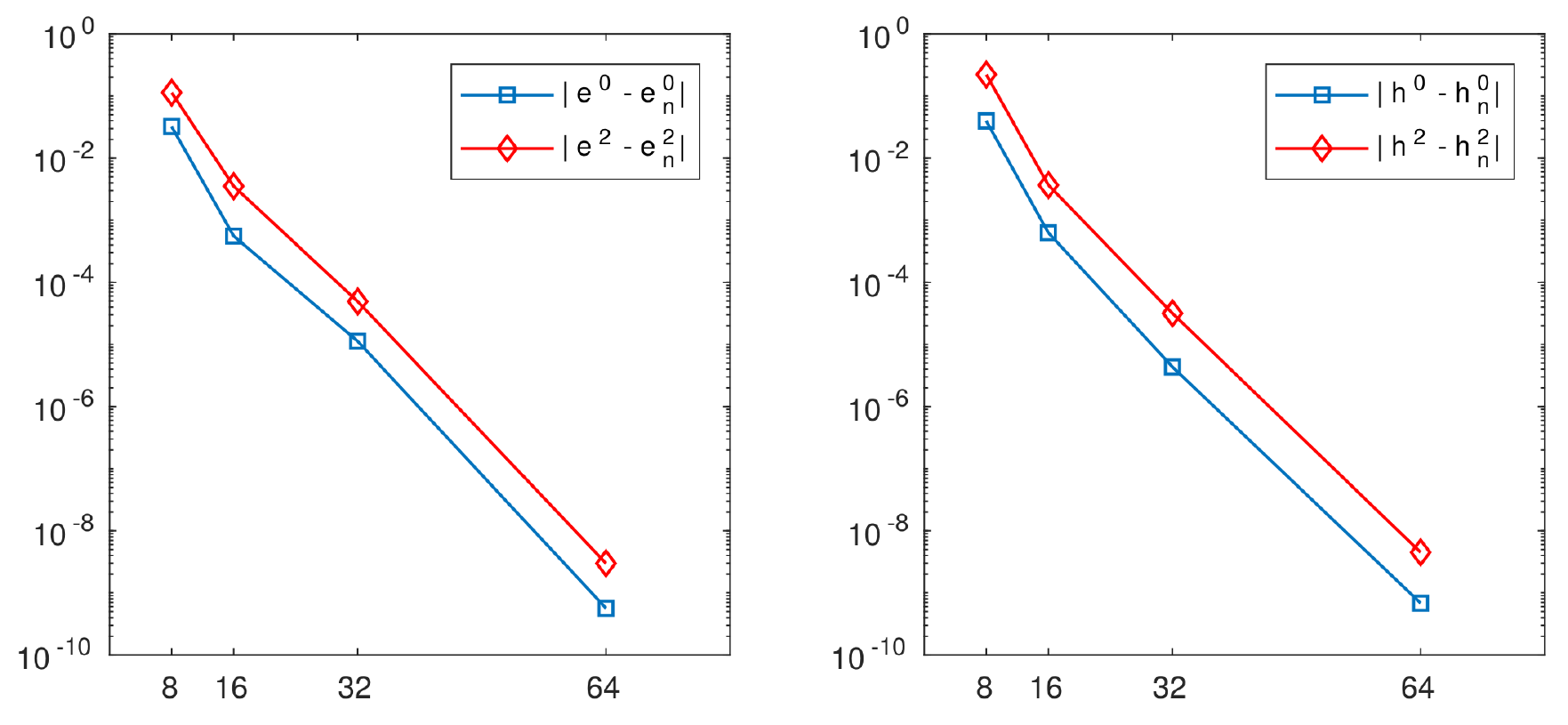}
\caption{ The $L^2-$norm (in semi-logarithmic scale) of the difference between the computed and the exact scattered (blue line) and the interior (red line) electric (left) and the magnetic (right) fields. The plots are with respect to $n,$ for the second case of the first example.  }\label{fig3}
\end{center}
\end{figure}

{\renewcommand{\arraystretch}{1.4}
 \begin{table}[t]
\begin{center}
 \begin{tabular}{| c  | c  | c  | } 
 \hline
 $n$ & $ e_n^0 (0.2, \, 0.7) $ & $ h_n^0 (0.2, \,0.7) $  
\\ \hline 
8 & $ 0.959974539981      -i\, 0.688119559604  $ & $    0.949222323693     -i\, 0.596002286134      $\\
 16  & $     0.968181135721      -i\, 0.686366381756   $ & $   0.960497156194     -i\, 0.607397006008      $\\
 32  & $     0.968378330983      -i\, 0.686291758194 $ & $     0.960550801453     -i\,0.607417933832      $\\
 64   & $    0.968378106099      -i\, 0.686291722922  $ & $    0.960551378447     -i\, 0.607418413636                    $\\
 \hline
%\cline{1-1}\hhline{~==}
\multicolumn{1}{c|}{} &  $ e^0 (0.2, \, 0.7) $ & $ h^0 (0.2, \,0.7) $   \\
\cline{2-3} 
\multicolumn{1}{c|}{} &  $      0.968378106099    -i\, 0.686291722922  $ & $    0.960551378446   -i\, 0.607418413635     $  \\ \cline{2-3}
\end{tabular}
\caption{The computed and the exact scattered electric and magnetic fields in $\Omega_0.$ }\label{table3} 
\bigskip
 \begin{tabular}{| c  | c  | c  | } 
 \hline
 $n$ & $ e_n^\infty  (\b{\hat{x}}( 0)) $ & $ h_n^\infty (\b{\hat{x}}( 0)) $  
\\ \hline 
8 & $ 0.542584437477      -i\,0.654900164839 $ & $     0.648825689486     -i\,0.546227842661     $\\
 16  & $     0.551320002791      -i\,0.656421849521  $ & $    0.656410781814     -i\,0.551518614562     $\\
 32   & $    0.551551141020      -i\,0.656427307849  $ & $    0.656426811866     -i\,0.551550742378     $\\
 64  & $     0.551550951838      -i\,0.656427255240 $ & $     0.656427255241     -i\,0.551550951838     $\\
 \hline
%\cline{1-1}\hhline{~==}
\multicolumn{1}{c|}{} &  $ e^\infty (\b{\hat{x}}( 0)) $ & $ h^\infty (\b{\hat{x}}( 0)) $   \\
\cline{2-3} 
\multicolumn{1}{c|}{} &  $         0.551550951838      -i\,0.656427255240  $ & $     0.656427255240     -i\, 0.551550951838         $  \\ \cline{2-3}
\end{tabular}
\caption{The computed and the exact far-field of the electric and magnetic fields. }\label{table4} 
\end{center}
\end{table}

\textbf{Example 2 (oblique incidence)} We consider the scattering problem of an obliquely incident wave of the form \eqref{incident}, for different values of the polar angle $\phi,$ which corresponds to the incident direction in $\R^2.$ For the setup of the first example, with $\omega = 2$ and $\phi = \pi/6,$ we present the distribution of the norms $|e^j_n |$ and $|h^j_n |,$ for $j=0,1,2,$ in \autoref{fig4}. The values in \autoref{fig5}, correspond to the second case for $\omega = 2$ and $\phi = \pi/2.$ The material parameters are kept the same as in Example 1.

\begin{figure}[t]
\begin{center}
\includegraphics[scale=0.8]{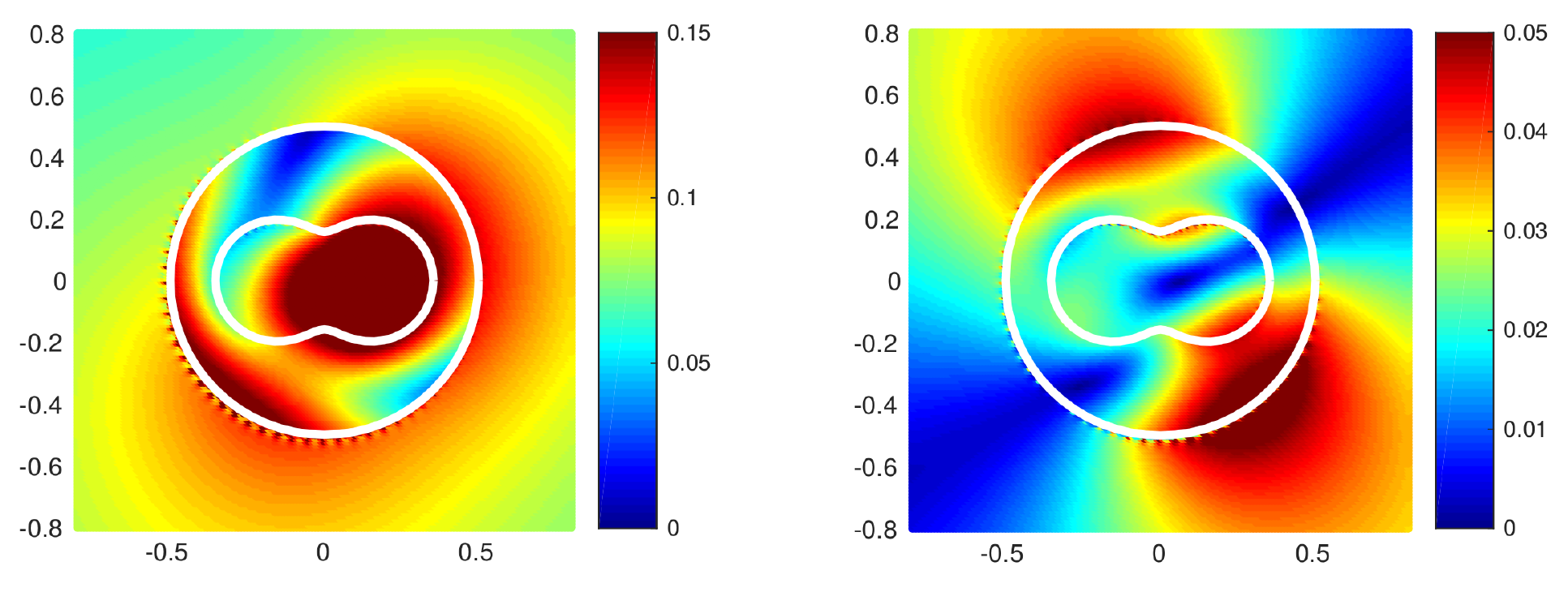}
\caption{ The norm of the electric (left) and magnetic (right) field, for $\omega =2$ and $\phi = \pi/6.$ }\label{fig4}
\bigskip
\includegraphics[scale=0.82]{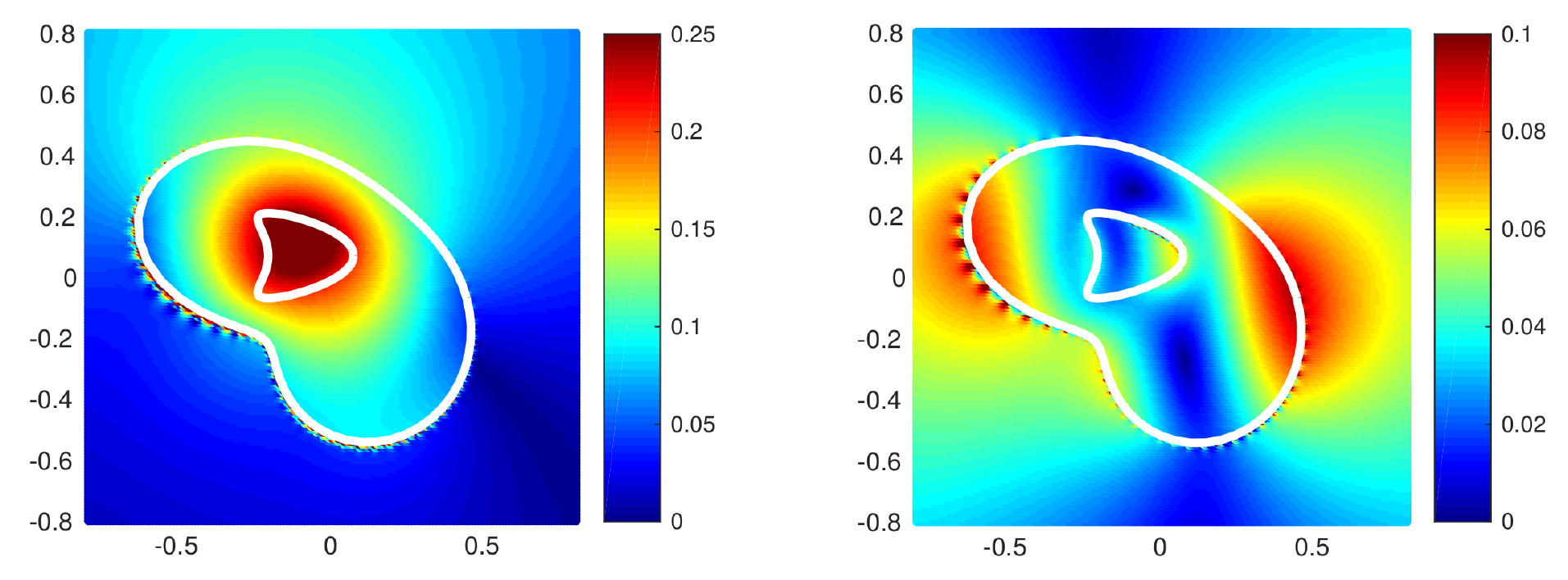}
\caption{ The norm of the electric (left) and magnetic (right) field, for $\omega =2$ and $\phi = \pi/2.$ }\label{fig5}
\end{center}
\end{figure}

\section{Conclusions}

In this work we addressed the scattering problem of a time-harmonic electromagnetic wave by an infinitely long, piecewise constant inhomogeneous and penetrable cylinder. The incident wave is transverse magnetic polarized. The 3D direct problem can be reduced to a 2D problem and we examined its well-posedness. The complexity of the problem is reflected in the transmission boundary conditions where the normal and tangential derivatives of the fields are coupled. We proved that the direct problem is equivalent to an interior eigenvalue problem for an elliptic and properly elliptic operator and we showed that the Shapiro-Lopatinskij condition is satisfied on both boundaries.  Thus, we obtained uniqueness and existence followed from the integral equation method.  Using a special integral representation of the fields, we derived a convergent scheme for the numerical approximation of the solution.

\section*{Acknowledgements}
The work of SG was co-financed by Greece and the European Union (European Social Fund-ESF) through the Operational Program $``$Human Resources Development, Education and Lifelong Learning$"$ in the context of the project $``$Strengthening Human Resources Research Potential via Doctorate Research-2nd Cycle$"$ (MIS-5000432), implemented by the State Scholarships Foundation (IKY).
The work of LM was supported by the Austrian Science Fund (FWF) in the project F6801--N36 within the Special Research Program SFB F68 $``$Tomography Across the Scales$".$

\appendix
\section{The Shapiro-Lopatinskij condition}
\label{Appendix}
In this section, we summarize the results from \cite{RoitShef,Wloka,WloRowLaw} needed for the proof of the Shapiro-Lopatinskij condition. The notation follows that of the referred works.  
 
Let $\Omega$ be an open bounded set in $\mathbb{R}^d$, with boundary $\Gamma_0$ and let $\Omega_2$ be a subdomain of $\Omega$ with boundary $\Gamma_1$ disjoint from $\Gamma_0$. Also, we set $\Omega_1:=\Omega\setminus\overline{\Omega_2}$, where $\overline{\Omega_2}=\Omega_2\cup\Gamma_1$. 
We consider the following boundary value problem
\begin{equation*}%\label{BVP}
\begin{aligned}
\b {A}^1(\b x,D) \b u^1(\b x) &= 0, & \b x\in\Omega_1, \\
\b {A}^2(\b x,D) \b u^2(\b x) &= 0,  & \b  x\in\Omega_2, \\
\b B^1(\b  y,D)  \b u^1(\b y) &= 0,  &\b  y\in\Gamma_0, \\
\b B^2(\b  y,D)\b u^1(\b y)+\b B^3(\b y,D)\b u^2(\b  y) &= 0, & \b  y\in\Gamma_1, \\
\end{aligned}
\end{equation*}
where $\b u^r \in H^2(\Omega_r)$ are vectors of size $b$, for $r=1,2$.
The linear partial differential operator $\b A^r$ is a  $b\times b$ matrix-valued operator defined by
\begin{equation*}%\label{difoper}
\b A^r(\b x,D):=\sum_{|s|\leq m}a_s^{r}(\b x)D^s,\quad \b x\in\overline{\Omega}_r,
\end{equation*}
where $m$ is the order of the differential operator and $a_s^r$ are smooth coefficients, with $D^s=D_1^{s_1}...D_d^{s_d}$, $D_j=i^{-1}\partial/\partial x_j$  and $|s|=s_1+...+s_d$. 

The  boundary differential operator is a $l\times b$ matrix-valued operator  given by
\begin{equation}\label{boundoper}
\begin{aligned}
\b B^c(\b y,D)=\sum_{|s|\leq m'}b_s^c(\b y) T_0(D^s), \quad \b y\in\Gamma_p, 
\end{aligned}
\end{equation}
where  $T_0$ is the trace operator, $m'$ denotes the order of the differential operator, with $b_s^c$ are smooth coefficients  for $p=0,1$ and $c=1,2,3$.

First, we show that the operators $\b A^1 $ and $\b B^1 $ satisfy the Shapiro-Lopatinskij condition on $\Gamma_0$. Using Fourier transformation, we may transform the boundary value problem 
\begin{equation*}
\begin{aligned}
\b A^1 (\b x,D)\b u^1 (\b x) &= 0, &\b x\in\Omega_1, \\
\b B^1 (\b y,D)\b u^1 (\ y) &= 0, &\b y\in\Gamma_0 ,
\end{aligned}
\end{equation*}
 to an initial value problem.
 
 We consider the case of $\b A=\b A^1=\b A^2.$ Let $\b x_0\in\Gamma_0$,  and $\b A_0$ be the principal part of the matrix $\b A$. We set $\b x_0$ to be at the origin of the coordinate system  and we choose the coordinate axis $x_d$ in the direction of the inward pointing normal and the other coordinates are perpendicular to $x_d$. Using the Fourier transform
\begin{equation*}
\mathcal{F}\{f\}(\b q) :=\int_{\mathbb{R}^d}e^{-i(\b z,\, \b q)}f(\b z)d\b z, \quad f\in L_1(\mathbb{R}^d),
\end{equation*}
the basic derivatives $ (1/i) \partial/\partial x_1 ,...,(1/i) \partial/\partial x_{d-1}$ (equipped with the factor $1/i$) are transformed to $(\xi_1,...,\xi_{d-1})=\bm \xi'\in T_{\Gamma_0}$, where $T_{\Gamma_0}$ is the tangential hyperplane of $\Gamma_0$ at the point $\b x_0$ \cite{Wloka}.
Setting $x_d=t$, we have
\begin{equation*}
\mathcal{F}_{d-1}\b A_0(\b x_0,D)=\b A_0\bigg(\b x_0;\bm\xi',\frac{1}{i}\frac{\partial}{\partial x_d}\bigg)=\b A_0\bigg(\b x_0;\bm\xi',\frac{1}{i}\frac{\partial}{\partial t}\bigg),
\end{equation*}
for $\bm\xi'\neq 0$. We consider the linear ordinary differential equation with constant coefficients
\begin{equation}\label{initprob}
\b A_0\bigg(\b x_0;\bm\xi',\frac{1}{i}\frac{\partial}{\partial t}\bigg)\b u^1(t)=0,\quad t>0,\,\, 0\neq\bm\xi'\in T_{\Gamma_0}.
\end{equation}
The solution space $\mathcal{M}$ of \eqref{initprob} decomposes to the direct sum
\begin{equation*}\label{solspace}
\mathcal{M}=\mathcal{M}^+\oplus\mathcal{M}^0\oplus\mathcal{M}^-,
\end{equation*}
where $\mathcal{M}^+$ and $\mathcal{M}^-$ are the solution spaces for the roots of $\mbox{det} P(\lambda)=\mbox{det}\b A_0(\b x_0,\xi ',\lambda)$, which are in the upper half plane $\Im \lambda>0$ and in the lower half plane $\Im \lambda<0,$ respectively. The solution space $M^0=\{0\}$, since we have assumed that the operator $\b {A}$ is elliptic i.e. $\mbox{det}P(\lambda)$ has no roots on the real axis \cite{Wloka}. 

Let $\b B_{0}^c$ denote the principal part of the matrix $\b B^c$, for $c=1,2,3$. Similarly, using Fourier transform, we rewrite the initial value conditions for $t=0,$ as
\[
\b B_{0}^1\bigg(\b x_0;\bm \xi',\frac{1}{i}\frac{\partial}{\partial t}\bigg)\b u^1(t)\bigg|_{t=0}=0.
\]

Next, we formulate the Shapiro-Lopatinskij condition for homogeneous boundary conditions and we describe different ways to prove it. 

\begin{definition}[see \cite{Wloka}]
The pair of operators $\b{A}(\b y,D),\, \b{B}^1(\b y,D),$ for $\b y\in\Gamma_0$ is said to fulfill the Shapiro-Lopatinskij condition on $\Gamma_0$, if the following statement holds for all $\b y\in\Gamma_0$ and  $0\neq\bm\xi'\in T_{\Gamma_0}$. The homogeneous initial value problem
\begin{equation}\label{homogenous}
\begin{aligned}
\b A_0\bigg(\b y;\bm \xi'.\frac{1}{i}\frac{\partial}{\partial t}\bigg)\b u^1(t)&= 0, \quad t>0, \\
\b B_{0}^1\bigg(\b y;\bm\xi',\frac{1}{i}\frac{\partial}{\partial t}\bigg)\b u^1(t)\bigg|_{t=0} &=0,
\end{aligned}
\end{equation}
has in $\mathcal{M}^+$ the unique solution $\b u^1(t)=0$.
\end{definition}

\begin{theorem}[see \cite{WloRowLaw}] \label{equiv}
Let the operator $\b {A}(\b x,D)$ be properly elliptic and $\b {B}^1(\b y,D)$ be the boundary operator as in \eqref{boundoper}. We fix $\b y\in\Gamma_0,$ and $0\neq \bm \xi'\in T_{\Gamma_0}$. Then, the following statements are equivalent:
\begin{enumerate}
  \item The initial value problem \eqref{homogenous} has a unique solution.
  \item Let $a^+(\lambda)$ and $a^- (\lambda)$ denote the polynomial which contains all the roots above and below the real axis, respectively. Then, $\det \b A_0(\b x_0,\bm\xi ',\lambda)=a^+(\lambda)\,  a^-(\lambda).$ If $\b A_{co}$ denotes the cofactor matrix of $\b A_0$, then the rows of the matrix $\b B_{0}^1 \b A_{co}$ are linearly independent modulo $a^+(\lambda)$.
\end{enumerate}
\end{theorem}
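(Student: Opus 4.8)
The plan is to recast both statements as a single injectivity/surjectivity assertion about the finite-dimensional space $\mathcal{M}^+$, and to bridge the ODE formulation (1) and the algebraic formulation (2) through a contour-integral representation of the stable solutions built from the adjugate $\b A_{co}$.

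First I would record the relevant dimensions. Since $\b A_0$ is the principal symbol of a properly elliptic operator of order $m$ on $b$-vectors, $\det\b A_0(\b x_0,\bm\xi',\lambda)$ is a polynomial of degree $mb$ in $\lambda$ with no real zeros, splitting as $a^+(\lambda)\,a^-(\lambda)$ with $\deg a^+=\deg a^-=l:=mb/2$; correspondingly $\dim\mathcal{M}^+=l$, which for a normal boundary system equals the number of rows of $\b B_0^1$. I introduce the linear map
\[
L:\mathcal{M}^+\to\C^l,\qquad L\b u=\b B_0^1\Big(\b x_0,\bm\xi',\tfrac{1}{i}\tfrac{d}{dt}\Big)\b u(t)\big|_{t=0}.
\]
Statement (1) is exactly the injectivity of $L$; since $\dim\mathcal{M}^+=l$, injectivity is equivalent to surjectivity, so it suffices to identify surjectivity of $L$ with (2).

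Next I would parametrize $\mathcal{M}^+$. Using $\b A_0(\lambda)\b A_{co}(\lambda)=\det\b A_0(\lambda)\,\b I=a^+(\lambda)a^-(\lambda)\b I$, every stable solution can be written as
\[
\b u_{\b h}(t)=\frac{1}{2\pi i}\oint_{\gamma^+}e^{i\lambda t}\,\frac{\b A_{co}(\lambda)\,\b h(\lambda)}{a^+(\lambda)\,a^-(\lambda)}\,d\lambda,
\]
where $\b h$ runs over $\C$-vector polynomials and $\gamma^+$ is a positively oriented loop enclosing exactly the zeros of $a^+$. Because $a^-$ is holomorphic and nonvanishing inside $\gamma^+$, each $\b u_{\b h}$ solves the ODE and decays as $t\to+\infty$, and a residue computation shows $\b h\mapsto\b u_{\b h}$ is onto $\mathcal{M}^+$, the integral automatically handling multiple roots (here $a^{\pm}=(\lambda\mp i|\xi_1|)^2$). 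Applying the boundary symbol termwise via $\b B_0^1(\tfrac{1}{i}\tfrac{d}{dt})e^{i\lambda t}=\b B_0^1(\lambda)e^{i\lambda t}$ and setting $t=0$ gives
\[
L\b u_{\b h}=\frac{1}{2\pi i}\oint_{\gamma^+}\frac{\b R(\lambda)\,\b h(\lambda)}{a^+(\lambda)\,a^-(\lambda)}\,d\lambda,\qquad \b R:=\b B_0^1\,\b A_{co}.
\]
A constant vector $\b c\in\C^l$ annihilates the range of $L$ iff the scalar loop integral of $\b c^\top\b R\,\b h/(a^+a^-)$ vanishes for all $\b h$. Testing with $\b h=a^-\b e_j$ forces $a^+$ to divide each entry of $\b c^\top\b R$; conversely, if $a^+\mid\b c^\top\b R$ entrywise then $\b c^\top\b R\,\b h/(a^+a^-)$ is holomorphic inside $\gamma^+$ and the integral vanishes for every $\b h$. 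Thus the cokernel of $L$ is trivial precisely when no nonzero $\b c$ satisfies $\b c^\top(\b B_0^1\b A_{co})\equiv0$ modulo $a^+$, i.e. exactly when the rows of $\b B_0^1\b A_{co}$ are linearly independent modulo $a^+$. With the dimension count this yields (1) $\Leftrightarrow$ (2).

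The symbol bookkeeping and the residue estimates are routine; the step I expect to be the main obstacle is establishing rigorously that $\b h\mapsto\b u_{\b h}$ surjects onto $\mathcal{M}^+$ with a nondegenerate residue pairing, including the confluent case of roots of multiplicity $>1$. This is exactly where proper ellipticity is indispensable, since it guarantees the absence of real roots and the balanced splitting $\deg a^+=\deg a^-=l$; a careless treatment of the associated Jordan structure would invalidate the correspondence. Once that identification is secured, the equivalence follows immediately from the displayed residue identity.
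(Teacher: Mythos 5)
The first thing to note is that the paper contains no proof of \autoref{equiv} at all: the theorem is imported verbatim from \cite{WloRowLaw} (with \cite{RoitShef} for the transmission variant) and is only \emph{applied} in the proof of \autoref{prop_ell}. So your proposal can only be measured against the standard literature argument, and in outline you have reproduced exactly that argument: parametrize $\mathcal{M}^+$ by contour integrals built from the adjugate $\b A_{co}$, observe that the boundary data of the parametrized solutions are residues of $\b B_0^1\b A_{co}\,\b h/(a^+a^-)$, and convert condition (2) into triviality of the cokernel of the map $L:\mathcal{M}^+\to\C^l$, which by the dimension count $\dim\mathcal{M}^+=\deg a^+=l$ is equivalent to the injectivity asserted in (1). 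The architecture is the right one, but two steps are defective, and the first is false as written.

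(a) The claim ``testing with $\b h=a^-\b e_j$ forces $a^+$ to divide each entry of $\b c^\top\b R$'' fails whenever $a^+$ has a multiple root --- which is precisely the situation of this paper, where $a^+(\lambda)=(\lambda-i|\xi_1|)^2$. Indeed, if an entry of $\b c^\top\b R$ reduces modulo $a^+$ to a nonzero constant $r$, then
\[
\frac{1}{2\pi i}\oint_{\gamma^+}\frac{r}{(\lambda-i|\xi_1|)^{2}}\,\d\lambda=0
\]
even though $a^+\nmid r$: a single contour integral sees only the residue, and at a double root the residue cannot detect divisibility. The repair is to test with $\b h=\lambda^{k}a^-\b e_j$ for \emph{all} $k\ge 0$: the integrand $\lambda^{k}(\b c^\top\b R)_j/a^+$ has poles only inside $\gamma^+$, so the contour may be blown up to a large circle, the integrals become the Laurent coefficients at infinity of $(\b c^\top\b R)_j/a^+$, and their simultaneous vanishing does force the remainder of $(\b c^\top\b R)_j$ modulo $a^+$ to vanish. (b) The surjectivity of $\b h\mapsto\b u_{\b h}$ onto $\mathcal{M}^+$, together with the dimension formula $\dim\mathcal{M}^+=\deg a^+$, is not an optional refinement that ``a residue computation shows'': your cokernel characterization is computed only on the vectors $L\b u_{\b h}$, so without surjectivity these need not span the range of $L$ and the implication from (1) to (2) breaks, while without the dimension formula injectivity and surjectivity of $L$ are not interchangeable. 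You correctly flag this as the main obstacle, but a proof proposal must close it. The standard argument, which also disposes of the Jordan-chain worry you raise, is the one-sided transform: for $u\in\mathcal{M}^+$ put $\hat u(\lambda)=\int_0^\infty e^{-i\lambda t}u(t)\,\d t$, convergent in a half-plane $\Im\lambda<\delta$ because $u$ decays exponentially; integrating by parts in $\b A_0\bigl(\bm\xi',\tfrac{1}{i}\tfrac{\d}{\d t}\bigr)u=0$ gives $\b A_0(\lambda)\hat u(\lambda)=\b h_u(\lambda)$ with $\b h_u$ a vector polynomial assembled from the Cauchy data of $u$ at $t=0$, and residue inversion then exhibits $u=\b u_{\b h_u}$ and yields $\dim\mathcal{M}^+=\deg a^+$. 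With (a) and (b) repaired, your proof is correct and coincides with the classical one cited by the paper.
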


Following \cite{Agran79}, we can apply this theory also to our case with the transmission boundary condition and show the equivalence to an initial boundary value problem. Then, the Shapiro-Lopatinskij condition is satisfied if  $\b B_{0}^2\b A_{co}\equiv 0(\mbox{mod}\,a^-)$ and $\b B_0^3\b A_{co}\equiv 0(\mbox{mod}\,a^+)$ \cite{RoitShef}.

\bibliographystyle{plain}
\bibliography{gindmindgiog_ref}
\end{document}